%% file: generating_snarks_springer.tex
\documentclass[pdflatex,sn-mathphys-num]{sn-jnl}

\usepackage{graphicx}%
\usepackage{multirow}%
\usepackage{amsmath,amssymb,amsfonts}%
\usepackage{amsthm}%
\usepackage{mathrsfs}%
\usepackage[title]{appendix}%
\usepackage{xcolor}%
\usepackage{textcomp}%
\usepackage{manyfoot}%
\usepackage{booktabs}%
\usepackage{algorithm}%
\usepackage{algorithmicx}%
\usepackage{algpseudocode}%
\usepackage{listings}%

\theoremstyle{thmstyleone}%
\newtheorem{lemma}{Lemma}%
\newtheorem{observation}{Observation}%

\theoremstyle{thmstylethree}%

\usepackage{geometry} 
\usepackage{amsmath}
\usepackage{graphicx} 
\usepackage{color}

\usepackage{subfig}
\usepackage{tikz}
\usepackage{tkz-euclide}

\usepackage{booktabs} 
\usepackage{array} 
\usepackage{paralist} 
\usepackage{verbatim} 
\usepackage{subfig} 
\usepackage{tikz}
\usepackage{tkz-euclide}
\usepackage{fancyhdr} 

\usepackage{amsfonts}

\usepackage{amsmath}

\begin{document}



\title[Generating Snarks]{ Algorithms for the Generation of Snarks }
\author[]{\fnm{Gunnar}\sur{Brinkmann}}\email{gunnar.brinkmann@ugent.be}
\author[]{\fnm{Steven}\sur{Van Overberghe}}\email{steven.vanoverberghe@ugent.be}

\affil[]{\orgdiv{Department of Mathematics, Computer Science, and Statistics}, \orgname{Ghent University}, \orgaddress{\street{Krijgslaan 299 - S9}, \city{9000 Ghent}, \country{Belgium}}}

\abstract { The essential requirement for a cubic graph to be called a snark is that it can not be edge-coloured with three colours. To avoid trivial cases, varying 
  restrictions on the connectivity are imposed. Snarks are not only interesting in themselves, but also a
  valuable test field for conjectures about graphs that are not snarks and sometimes not even cubic.  For many important open problems in graph
  theory it is proven that minimal counterexamples would be snarks.\\
 
  We give two new algorithms for the generation of snarks and results of computer programs implementing these algorithms. One algorithm is for snarks with girth
  exactly 4 and is used for generating complete lists of girth 4 snarks on up to 40 vertices.  The second algorithm lists snarks with girth at least 5 and is used
  for generating complete lists of such snarks on up to 38 vertices. We also give complete lists of strong snarks (in the terminology of Jaeger)
  on up to 40 vertices.\\
}

\keywords{graph, snark, structure enumeration}

\maketitle

\section{Introduction}

 Information about conjectures for which it was proven that snarks are possible minimal counterexamples and basic results about snarks and reducibility can
 e.g. be found in \cite{snarkapp} or \cite{snarklist3}. In this paper we will just focus on the algorithmic aspects of generating snarks. We will follow the
 tradition to call graphs with maximum degree $3$ and chromatic index 3 {\em colourable} (or {\em class 1}) and graphs with maximum degree $3$ and chromatic
 index 4 {\em uncolourable} (or {\em class 2}). The definition of when an uncolourable cubic graph is called a snark varies a lot. The initial definition by
 Gardner \cite{gardner_76} just required a snark to be a simple cubic bridgeless uncolourable graph. So not allowing bridges was the only requirement to avoid
 {\em trivial} cases.  Later various definitions with stronger requirements on girth and connectivity were also used, depending on different notions of what
 must be considered trivial and therefore excluded.  Some earlier papers called cyclically 4-connected cubic class 2 graphs with girth 4 {\em weak snarks}. Already before,
 Jaeger \cite{Jaeger85} introduced the notion of {\em strong snarks}, which can also have girth exactly 4. This implies the existence of {\em
   strong weak snarks}. To avoid this self-contradictory term, we will use the term {\em girth 4 snarks} (or short {\em g4-snarks}) for cyclically 4-connected cubic graphs with
 chromatic index 4 and girth exactly 4. We will refer to cubic graphs with chromatic index 4 and girth at least 5 explicitly as {\em proper snarks}. The
 term {\em snark} refers to the union of these classes.

 Until $1975$, only $4$ proper snarks were known -- the {\em Petersen graph}, the {\em Blanu{\v s}a snark} (later, a second, closely related, snark called Blanu{\v s}a snark was discovered),
 the {\em Descartes snark}, and the {\em Szekeres snark} (\cite{isaacs_75}). In his $1975$ paper Isaacs then constructs an infinite family of
 snarks, but in spite of the infinite number, they form no convincing set for conjecture testing as they all share a common very specific structure. Since then, more
 constructions of snarks have been proposed, but again with the same intrinsic problem of exhibiting a very special structure. To this end, for conjecture testing, complete lists
 of snarks for as many vertices as possible are desirable.

 A recursive construction method, that is: a method constructing {\bf all} snarks on $n$ vertices from snarks with fewer than $n$ vertices by simple constructions, would be ideal, but
 no promising ideas or approaches in this direction exist. So far the most efficient programs to generate snarks were based on efficient programs for the generation of all
 cubic graphs with girth at least 4 or 5. These programs were then equipped with routines guaranteeing cyclic connectivity at least 4 and chromatic index 4 for
 the output graphs.

 When generating all cubic graphs with girth 4 or 5, cyclic connectivity at least 4 does not restrict the set of graphs much. On 28 vertices, for example, more
 than $88.5\%$ of the cubic graphs with girth at least 4 are also cyclically 4-connected. This ratio even seems to be increasing with the number of vertices.
 On the other hand, fewer than $0.09\%$ of all cubic graphs on 28 vertices with girth at least 4 have chromatic index 4 and the ratio seems to be decreasing. So
 for graph generators designed to generate {\bf all} cubic graphs, a method to detect early whether the final graph constructed will be colourable is crucial to
 be able to generate snarks on $n$ vertices much faster than the superset of cubic graphs without restrictions on the chromatic index.

 Nevertheless, the first program to generate relatively large sets of snarks, {\em minibaum} \cite{minibaum}, developed in 1992, did nothing like that: it
 just generated all cubic graphs and filtered the output for cyclically 4-connected graphs with chromatic index 4. Essentially the same holds for the later
 program described in \cite{snarklist3}, for which no running times have been given.  In 2011 the program {\em snarkhunter} was developed
 \cite{snarkapp}\cite{snarkhunter}.  It was approximately 14 times faster than minibaum for generating proper snarks and almost 30 times faster for generating all snarks. The
 key was that it was the first program that was able to efficiently include bounding criteria avoiding colourable graphs already before the graph was
 completed. This new program made it possible to generate all snarks on up to 36 vertices. The proper snarks
 alone already took 73 CPU years and due to the fast growth of computing times, the next step would have needed far more than 1000 core years at that time. Tests
 for 24 up to 32 vertices 
 on the local HPC infrastructure we used for our computations (processor: AMD EPYC 7552 (Rome @ 2.2 GHz)), suggest that 
 nowadays about $840$ CPU years would be necessary for the proper snarks on 38 vertices.
 For all -- that is: girth 4 as well as proper -- snarks the time consumption would be
 approximately $1 250$ CPU years. To this end a much faster algorithm was needed in order to be able to construct complete lists of snarks for the next step: 38 vertices. 

 
\begin{figure}[tb]
	\centering
	\includegraphics[width=0.75\textwidth]{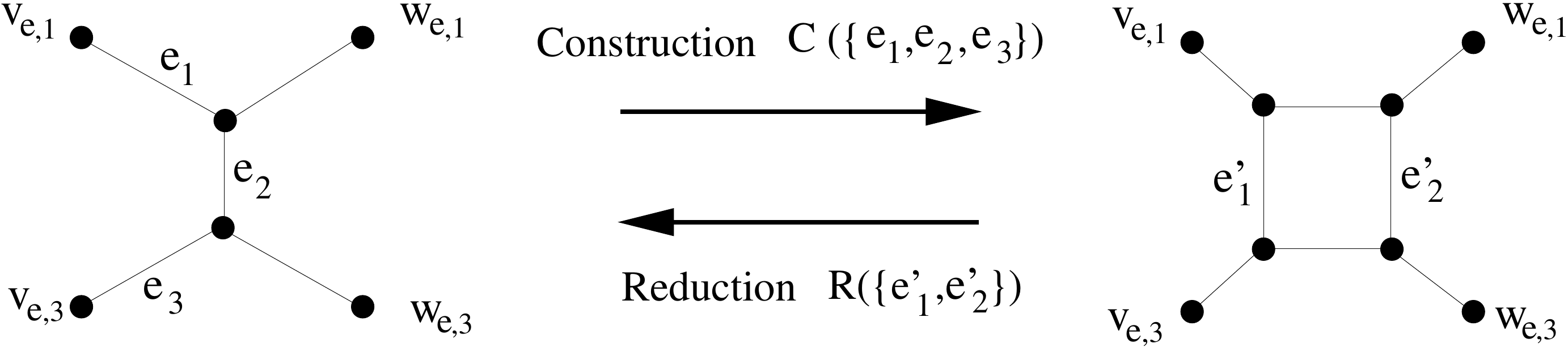}
	\caption{Construction and reduction operations used for girth 4 snarks. The construction operation applied to the path $e_1,e_2,e_3$ is denoted as $C(\{e_1,e_2,e_3\})$.
          The reduction operation applied to the pair $e'_1,e'_2$ of edges that are opposite in a 4-cycle, is denoted as $R(\{e'_1,e'_2\})$. }
	\label{fig:weakconstruct}
\end{figure}

 \section{Girth 4 Snarks -- the algorithm for the program {\em tetration}}\label{sec:weak}

 We call the construction operation $C()$ displayed in Figure~\ref{fig:weakconstruct} an {\em edge-doubling operation}. Formally {\em doubling edge $e_2$ in
 the path $e_1,e_2,e_3$} is defined as follows: the edges $e_1$ and $e_3$ are subdivided and the new vertices of degree 2 are
 connected. In total there are 4 ways to apply an edge doubling operation for a central edge $e_2$, but it is easy to see that they form two pairs of operations
 that give isomorphic results.

 For the inverse operation, edges on opposite sides of a 4-cycle, like $e'_1,e'_2$ in Figure~\ref{fig:weakconstruct}, play an important role. They will be referred to as {\em edge
   pairs}. 

We use the following two lemmas that are also consequences of Theorem~1 in \cite{decosnarks}, resp.\ Corollary 11 in \cite{removableedges4cyc}, but as the
formulations in these papers are different and sometimes more general, we will give short proofs of the specific results we need.

\begin{figure}[tb]
	\centering
	\includegraphics[width=0.65\textwidth]{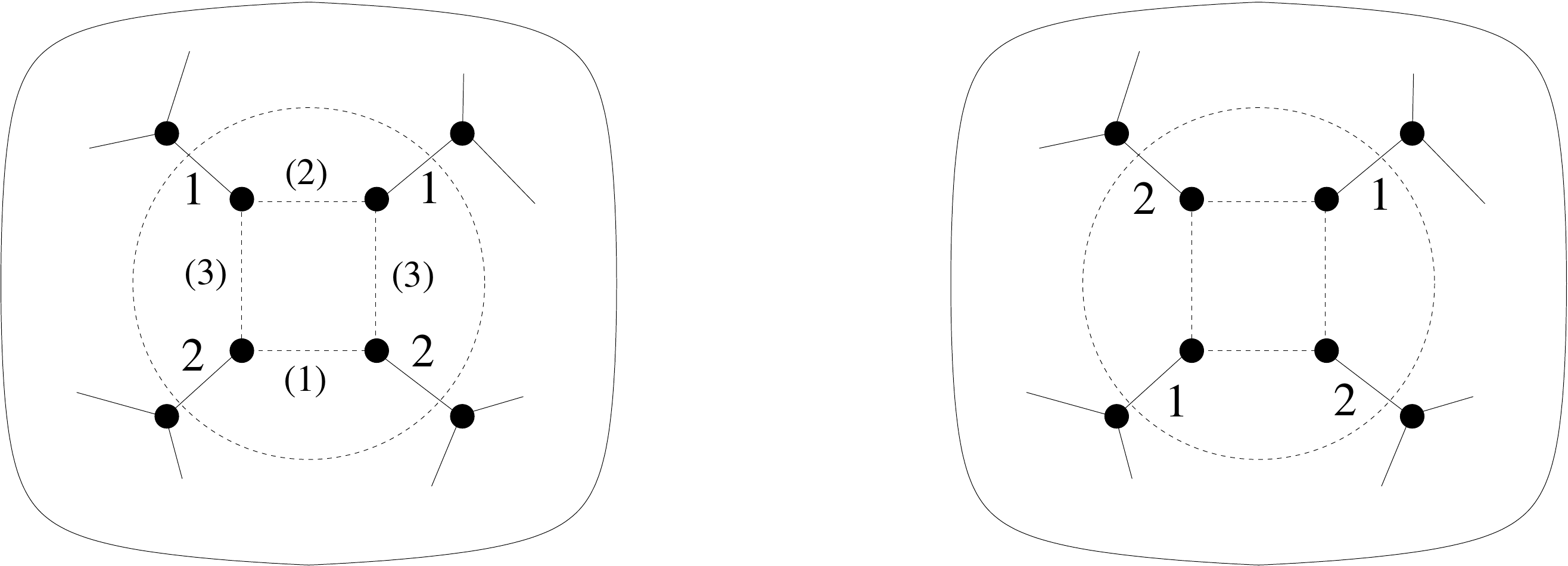}
	\caption{The colours of the edges adjacent to the 4-cycle in the proof of Lemma~\ref{lem:deco}.}
	\label{fig:colours}
\end{figure}

\begin{lemma}\label{lem:deco}
  (Cameron, Chetwynd, Watkins \cite{decosnarks})
\nopagebreak[3]
A cubic graph $G$ containing a 4-cycle $Z$ is class 2 if and only if the graph $G'=G-Z$, in which the vertices of the 4-cycle are removed, is class 2.
  
\end{lemma}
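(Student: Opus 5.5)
We prove the equivalent statement that $G$ is 3-edge-colourable if and only if $G'=G-Z$ is. Write $Z=v_1v_2v_3v_4$, let $e_i=v_iu_i$ be the edge at $v_i$ not on $Z$, and first assume the generic situation: $Z$ has no chord and the $u_i$ are four distinct vertices outside $Z$. The degenerate cases are treated at the end.

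One direction is trivial. A 3-edge-colouring of $G$ restricts to one of the subgraph $G'$, so if $G'$ is class 2, so is $G$.

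For the converse, take a 3-edge-colouring of $G'$ with colours $a,b,c$. In $G'$ each $u_i$ has degree 2, so exactly one colour $c_i$ is missing at $u_i$. We are forced to give $e_i$ the colour $c_i$, and we must then colour the four edges of $Z$.

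\emph{Parity step.} Every vertex of $G'$ other than $u_1,\dots,u_4$ has degree 3. Each colour class is a matching, so for a colour $x$ the number of vertices of $G'$ missing $x$ is congruent to $|V(G')|$ modulo 2. Hence the numbers $n_a,n_b,n_c$ of indices $i$ with $c_i=a,b,c$ all have the same parity. Since they sum to 4, the multiset $\{c_1,\dots,c_4\}$ is either $\{x,x,x,x\}$ or $\{x,x,y,y\}$ with $x\ne y$.

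\emph{Extension to $Z$.} If all $c_i=x$, colour $Z$ alternately with the two other colours. If, say, $c_1=c_2=x$ and $c_3=c_4=y$, with $z$ the third colour, colour $v_1v_2=y$, $v_2v_3=z$, $v_3v_4=x$ and $v_4v_1=z$; this is proper at all four vertices.

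\emph{Main obstacle: opposite edges equal.} The remaining case is $c_1=c_3=a$, $c_2=c_4=b$. Here every edge of $Z$ would need colour $c$, which is impossible, so I recolour $G'$ by a Kempe chain. Take the component of the subgraph of $G'$ formed by the $a$- and $b$-coloured edges that contains $u_1$. Since $u_1$ misses $a$, this component is a path with $u_1$ as an endpoint. Its other endpoint also has degree 1 in this subgraph, so it misses $a$ or $b$; the only vertices of $G'$ missing any colour are the $u_i$, so the other endpoint is $u_2$, $u_3$ or $u_4$. Swapping $a$ and $b$ on this path changes only the missing colours at its two endpoints:
\begin{itemize}
\item if it ends at $u_2$, the missing colours become $(b,a,a,b)$;
\item if it ends at $u_4$, they become $(b,b,a,a)$;
\item if it ends at $u_3$, they become $(b,b,b,b)$.
\end{itemize}
Each of these is one of the good patterns above, so the colouring extends to $Z$. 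Thus $G$ is class 1 whenever $G'$ is.

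\emph{Degenerate cases.} These are handled in the same spirit.
\begin{itemize}
\item If some $u_i$ coincide, say $u_i=u_j$ (such a vertex has degree 1 in $G'$), it misses two colours, and we give $e_i,e_j$ those two distinct colours. The same parity count applies, counting that vertex once for each missing colour.
\item If $Z$ has a chord, or some $u_i$ lies on $Z$, then $G$ is tiny, e.g.\ $K_4$ or a graph with a triangle glued to $Z$. These cases are checked directly, or excluded if the paper only applies the lemma in the chordless situation.
\end{itemize}
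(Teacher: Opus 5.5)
Your proof is correct and follows the paper's argument. Parity forces the colours at the four edges leaving $Z$ into one of the patterns $xxxx$, adjacent $xxyy$, or opposite $xyxy$; the first two extend to $Z$ directly, and the opposite case is repaired by an $a$--$b$ Kempe chain from $u_1$, with the same endpoint case analysis as the paper. The only difference in presentation is that you phrase this through missing colours at the $u_i$ in $G-Z$, whereas the paper colours the pendant edges in $G$ minus $E(Z)$; the paper's version absorbs coinciding $u_i$ without extra cases.

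One inaccuracy in your degenerate-case remark: a single chord of $Z$ does not make $G$ tiny, since a diamond can sit inside an arbitrarily large cubic graph. That case still follows from your parity count (the two outgoing edges get the same colour and the diamond is then easily coloured), and the paper's own proof tacitly assumes $Z$ is chordless anyway.
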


\begin{proof}
  $\Leftarrow$: This direction is obvious as adding vertices and edges can not decrease the number of colours that is necessary to colour the edges.

  $\Rightarrow$: Let $G$ be class 2 and assume that $G'$ can be edge-coloured with 3 colours. Then also the graph $G''$, in which only the edges of $Z$ were
  removed, can be edge-coloured with 3 colours.  Due to the Parity Lemma, each colour must occur an even number of times on the 4 edges connecting the vertices of $Z$ with
  the rest of the graph.  If one colour occurs 4 times, $Z$ can be coloured with the remaining two colours -- a contradiction. So w.l.o.g.\ one of the two
  situations in Figure~\ref{fig:colours} must occur. In the situation on the left, $Z$ can be coloured in the way given by the colours in brackets. In the
  situation on the right, $Z$ can not be coloured, so look at the Kempe-chain with colours 1 and 2 starting at one of the edges coloured 1. The chain must end
  in one of the other edges coloured 1 or 2.  If it is the edge coloured 1, switching colours in this chain results in the situation with four times colour
  2. If it ends in an edge coloured 2, switching colours results in the situation on the left, so in each case $G$ would be colourable -- a contradiction.

  \end{proof}

The following trivial but helpful observation will be used here and also later for {\em strong snarks}, so we formulate it explicitly for later reference:

\begin{observation}\label{obs:reduce}
  \begin{itemize}
  \item Subdividing edges in a graph does not change the cyclic connectivity.
    
  \item If an edge $e$ is removed from a cyclically $k$-connected cubic graph $G$ and the result is the graph $G_e$, then $G_e$ has a cyclic $(k-1)$-cut $C$
  if and only if $C \cup \{e\}$  is a cyclic $k$-cut in $G$.
\end{itemize}
\end{observation}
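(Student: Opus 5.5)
Both parts of Observation~\ref{obs:reduce} come straight from the definition: a cyclic $k$-cut is a set of $k$ edges whose removal leaves two components that each contain a cycle. Cyclic connectivity is the minimum size of such a set.

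For the first item, let $H$ arise from $G$ by subdividing an edge $uv$ into $uw,wv$. I will set up a correspondence between cyclic edge cuts of $G$ and of $H$ that preserves their size.
\begin{itemize}
\item Given a cyclic cut $C$ of $G$, replace $uv\in C$ by $uw$ (or leave $C$ alone if $uv\notin C$). This is a cut of $H$ of the same size. Its sides are the old sides, with $w$ attached to the side of $v$ (respectively to the side containing $uv$), so both sides still contain a cycle.
\item Conversely, take a minimum cyclic cut $C'$ of $H$. If it contains both $uw$ and $wv$, then $w$ is an isolated vertex, which contains no cycle, so $w$ cannot form a side on its own. Hence $w$ lies on one side and together with one of the two edges: moving $w$ to the other side swaps one of these edges for the other and keeps the count. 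If at most one of $uw,wv$ is cut, I replace it by $uv$. This gives a cyclic cut of $G$ of the same size, because a cycle through $w$ in $H$ becomes a cycle through $uv$ in $G$ and vice versa.
\end{itemize}
The minimum is therefore unchanged.

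For the second item, let $e=xy$. Removing $C$ from $G_e$ leaves components $A$ and $B$, each containing a cycle, and removing $C\cup\{e\}$ from $G$ yields exactly the same vertex partition and the same induced subgraphs.
\begin{itemize}
\item If $e$ joins $A$ and $B$, then $C\cup\{e\}$ is precisely the edge set between $A$ and $B$ in $G$, so it is a cyclic $k$-cut of $G$.
\item If $e$ lies inside, say, $A$, then $C$ alone is already a cyclic cut of $G$ of size $k-1$. Both sides still contain cycles, since adding $e$ back only adds edges inside $A$. This contradicts the cyclic $k$-connectivity of $G$, so this case cannot occur.
\end{itemize}
For the reverse direction, suppose $C\cup\{e\}$ is a cyclic $k$-cut of $G$ with $e\in C\cup\{e\}$. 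Then $e$ joins the two sides, and deleting $e$ first and then $C$ separates $G_e$ into the same two sides. Hence $C$ is a cyclic $(k-1)$-cut of $G_e$, provided the sides still contain cycles, which they do because their induced subgraphs are unchanged.

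The only delicate points are the degenerate cases: the isolated subdivision vertex in the first item, and, in the second item, the possibility that $e\in C$ or that $e$ lies within one side. Both are handled by the minimality and sidedness arguments above, so I do not expect a real obstacle.
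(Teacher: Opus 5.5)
Your proof is correct. The paper gives no proof at all (it calls the observation trivial), and your check from the definitions is the verification it leaves implicit.

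One sentence in the first item is garbled. When a cut contains both $uw$ and $wv$, moving $w$ to the other side removes two edges from the cut rather than swapping one for the other. State instead that this case cannot occur: under your exactly-two-components definition $w$ would be a cycle-free component, and for a minimum cut, putting back just one of the two edges gives a strictly smaller cyclic cut.
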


\begin{lemma}\label{lem:reduce}
  (Andersen, Fleischner, Jackson \cite{removableedges4cyc})

  Let $G$ be a cyclically 4-connected cubic graph on at least 10 vertices with a 4-cycle $e_1,e_2,e_3,e_4$. Then at least one of the graphs obtained by applying
 $R(\{e_1,e_3\})$ and $R(\{e_2,e_4\})$ is cyclically 4-connected.

\end{lemma}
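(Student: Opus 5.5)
Label the 4-cycle $Z$ as $a,b,c,d$ with $e_1=ab$, $e_2=bc$, $e_3=cd$, $e_4=da$, and let $a',b',c',d'$ be the outer neighbours of $a,b,c,d$. First I would settle the local structure. $G$ has no triangles: a triangle gives a cyclic 3-cut unless $G=K_4$. Hence $a'\neq b'$, and similarly for every pair of consecutive vertices of $Z$. If $a'=c'$, the set $\{a,b,c,d,a'\}$ is separated from the rest by at most 3 edges. The rest has at least 5 vertices, all of degree 3 except the at most three endpoints of these cut edges, so it contains a cycle. This is a cyclic 3-cut, which is impossible, so the outer neighbours are distinct.

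$R(\{e_1,e_3\})$ deletes $e_1,e_3$ and suppresses the four degree-2 vertices. The result $G_1$ is $G-Z$ plus the edges $a'd'$ and $b'c'$. Symmetrically, $G_2=R(\{e_2,e_4\})$ is $G-Z$ plus $a'b'$ and $c'd'$. By the first part of Observation~\ref{obs:reduce}, $G_1$ has the same cyclic connectivity as $G-e_1-e_3$. Applying the second part of Observation~\ref{obs:reduce} twice, a cyclic cut $C$ with $|C|\le 3$ in $G_1$ lifts to a cyclic cut $C\cup\{e_1,e_3\}$ of $G$. Its shore $X_1$ contains $a,d$ but not $b,c$, and $C$ contributes at most 3 non-cycle edges. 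I also have to treat the degenerate case where $G_1$ acquires a double edge, namely when $a'd'\in E(G)$. This gives a 2-cycle in $G_1$, and I would show directly that it produces a cyclic 3-cut around $\{a,d,a',d'\}$ in $G$, or that it cannot happen on at least 10 vertices.

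Now assume both reductions fail, giving shores $X_1\supseteq\{a,d\}$ with $X_1\cap\{b,c\}=\emptyset$ and $X_2\supseteq\{a,b\}$ with $X_2\cap\{c,d\}=\emptyset$. Each boundary consists of two edges of $Z$ plus at most 3 further edges. The four quadrants $X_1\cap X_2$, $X_1\setminus X_2$, $X_2\setminus X_1$ and the complement each contain exactly one vertex of $Z$. For the quadrant $Q_v$ containing $v$, let $s_v$ be the number of non-$Z$ edges of $\delta(Q_v)$. Each non-$Z$ edge crossing at least one of the two cuts is counted in exactly two of the $s_v$, so $\sum_v s_v\le 2\cdot 6=12$. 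Hence some quadrant has $s_v\le 3$. Deleting $v$ from $Q_v$ then gives a set $Q_v-v$ whose boundary has at most $s_v+1\le 4$ edges: the edge $vv'$ plus its share.

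This last step is the main obstacle. I expect it to show that $Q_v-v$ has boundary of size at most 3 and contains a cycle, with its complement also containing a cycle, contradicting cyclic 4-connectivity. The slack is exactly one edge. I would therefore choose the two cuts $C$ as \emph{minimum} cyclic cuts in $G_1$ and $G_2$ and use that they were cyclic in $G_i$, so each shore contains a cycle avoiding $Z$ or using a new edge $a'd'$ or $b'c'$. This should rule out the tight case where all $s_v=3$, no diagonal edges exist and every quadrant boundary has size exactly 4. In that tight case the quadrants are forced to be very small, essentially single edges or 4-cycles around each vertex of $Z$. Counting vertices, this should force $|V(G)|<10$, e.g.\ the cube or the graphs on 8 vertices, which is where the hypothesis of at least 10 vertices enters. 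I would carry out this final analysis by cases on whether $v'$ lies in $Q_v$ and on how many edges of $C$ are the new edges $a'd'$ and $b'c'$.
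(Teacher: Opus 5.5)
\textbf{The proposal misreads the reduction $R$, and the statement it targets is false.} In the paper, $R(\{e_1,e_3\})$ is the inverse of edge doubling. It deletes \emph{one} edge of the pair, say $e_3=cd$, and suppresses $c$ and $d$. The result has $|V(G)|-2$ vertices: $G-Z$ plus two adjacent new vertices, one joined to $a',d'$ and one to $b',c'$. The paper's proof uses exactly this when it says $G-e_3$ is the reduced graph with two edges subdivided. You instead delete both $e_1$ and $e_3$ and obtain $G-Z+a'd'+b'c'$ on $|V(G)|-4$ vertices.

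For your operation the lemma fails. Take the pentagonal prism $C_5\times K_2$, with rails $u_0\dots u_4$, $w_0\dots w_4$ and rungs $u_iw_i$. It is cyclically 4-connected on 10 vertices. Choose $Z=u_0u_1w_1w_0$.
\begin{itemize}
\item Here $a'd'=u_4w_4$ and $b'c'=u_2w_2$ are already rungs. So your $G_1$ has 2-cycles cut off by two edges.
\item Your $G_2=G-Z+u_2u_4+w_2w_4$ is the triangular prism, which has a cyclic 3-cut.
\end{itemize}
The same example refutes your plan for the degenerate case. Here $a'd'\in E(G)$, yet $\{a,d,a',d'\}$ is a 4-cycle with four outgoing edges, not a cyclic 3-cut, and $|V(G)|=10$. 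So the one-edge slack you identify at the end cannot be closed by choosing minimum cuts or by any case analysis. (For comparison, the paper's $R(\{e_2,e_4\})$ turns this prism into the cube, as the lemma says.)

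\textbf{How to fix it.} With the correct operation your quadrant strategy works, and it is then essentially the paper's proof; the paper's sets $C_1,\dots,C_4$ are your $Q_v-v$.
\begin{itemize}
\item A cyclic 3-cut in $R(\{e_1,e_3\})$ gives, by Observation~\ref{obs:reduce} applied once to $G-e_3$, a cyclic 4-cut $C_0$ of $G$ containing $e_3$.
\item Edges of a cyclic 4-cut in a cyclically 4-connected cubic graph are pairwise non-adjacent. So $e_2$, $e_4$ and all four spokes $vv'$ lie outside $C_0$, which forces $e_1\in C_0$.
\item Hence each lifted cut has only \emph{two} non-$Z$ edges, not three, and every $v'$ lies in $Q_v$.
\end{itemize}
The sets $Q_v-v$ then partition $V(G)\setminus V(Z)$, and each is nonempty. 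Counting the four spokes once and at most four crossing edges twice gives $\sum_v|\delta(Q_v-v)|\le 4+2\cdot 4=12$. Each $Q_v-v$ has the cycle $Z$ in its complement, so it must have at least 3 boundary edges. Hence each has exactly 3, so each is acyclic, so each is a single vertex. This gives $|V(G)|=8$, a contradiction. With this reading the double-edge case also disappears, since a double edge would require $a'=d'$, i.e.\ a triangle.
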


\begin{proof}

  We will use the notation from Figure~\ref{fig:weakproof}. 

  Assume that the result $G'$ of applying $R(\{e_1,e_3\})$ to $G$ has a cyclic 3-cut. Then the graph obtained from $G$ by removing $e_3$ -- which is
  isomorphic to $G'$ with two edges subdivided -- has a cyclic 3-cut and by Observation~\ref{obs:reduce} the edge $e_3$ is in a cyclic 4-cut $C_0$ of $G$.  As
  in minimal cyclic cuts in cubic graphs the edges do not share vertices and as the endpoints of $e_1$ are in different components of $G-C_0$, also $e_1$ is in
  $C_0$ and we have one component $C_{3,4}$ containing $v_3$ and $v_4$ and one component $C_{1,2}$ containing $v_1$ and $v_2$. Removing also the vertices of $Z$,
  both resulting sets $C^-_{1,2}$ and $C^-_{3,4}$ have 4 edges leaving the set in $G$ with edges between the parts counted as leaving both.

  Assuming that also the result of $R\{(e_2,e_4\})$ has a cyclic 3-edge cut, in the same way we get sets $C^-_{1,4}$ and $C^-_{2,3}$ that are left by 4 edges
  each -- two of those four the edges adjacent to $Z$ that also occur for $C^-_{1,2}$ and $C^-_{3,4}$. Let now $C_1=C^-_{1,2}\cap C^-_{1,4}$ and analogously for $C_2,C_3$, and $C_4$.
  Then $C_1,\dots,C_4$ is a partition of the vertex set of $G-Z$ and in total we have $12$ edges leaving $C_1,\dots,C_4$.

  As in the complement of each $C_i$ in $G$ there is the cycle $Z$ and as edge-cuts with fewer than 3 edges are always cyclic cuts, each $C_i$ must be left by exactly 3 edges which form a
  non-cyclic cut. So each $C_i$ consists of a single vertex, which implies that $G$ has 8 vertices -- a contradiction.

\end{proof}

\begin{figure}[tb]
	\centering
	\includegraphics[width=0.45\textwidth]{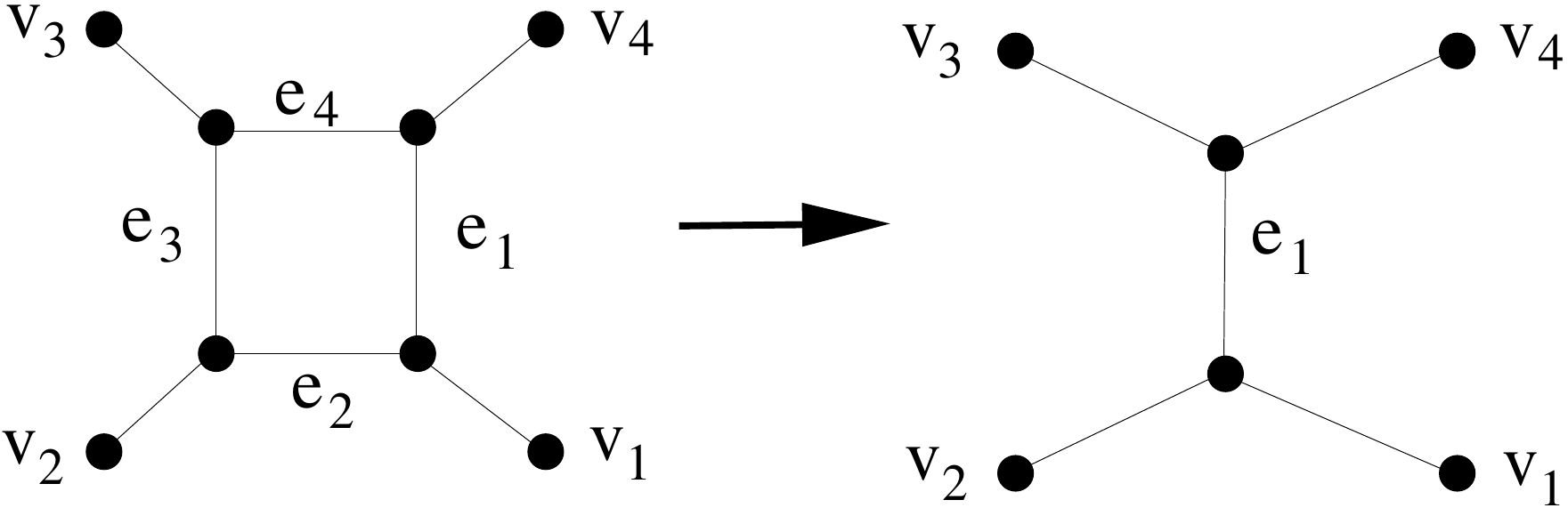}
	\caption{The notation for vertices and edges for the proof of Lemma~\ref{lem:reduce}.}
	\label{fig:weakproof}
\end{figure}

The following lemma essentially says that all g4-snarks are direct or indirect descendants of proper snarks by the edge doubling operation. A similar result for
proper snarks with an operation subdividing and connecting edges at larger distances is not possible. Although all 38
proper snarks on 24 vertices can be obtained from snarks on 22 vertices by subdividing edges and connecting the new vertices, for all other vertex numbers for which snarks exist, there are proper snarks that can not be obtained this way as follows from the -- much stronger -- theorems A and B in \cite{irreducible_snarks}.

\begin{lemma}\label{lem:weak}
   ~\\
   \begin{description}

   \item[a.)] The result of applying edge-doubling operations in each possible way to the (g4- or proper) snarks on $n$ vertices gives a superset $M$ of the class of
   g4-snarks on $n+2$ vertices.

 \item[b.)] All graphs in $M$ are cubic graphs with girth and cyclic connectivity exactly 4, but they are not necessarily class 2. Nevertheless, when applying the edge doubling operation
   in the four possible ways to an edge $e$, either all results are class 2 or none of them.

   \end{description}

   \end{lemma}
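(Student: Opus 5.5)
We prove a.) by reduction. Let $G$ be a g4-snark on $n+2$ vertices and let $e_1,e_2,e_3,e_4$ be a 4-cycle $Z$ in $G$. Since $G$ is cyclically 4-connected and class 2, the graph has at least 10 vertices. The small cases $n+2\le 8$ contain no snark, so they need no separate check. By Lemma~\ref{lem:reduce}, one of $R(\{e_1,e_3\})$ and $R(\{e_2,e_4\})$, say the first, yields a graph $G'$ that is cyclically 4-connected.

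We first check that $G'$ is a simple cubic graph. A multiple edge in $G'$ would give a 2-cycle. Going back to $G$, that would mean a cyclic cut of size at most 3 or a triangle, which is impossible for $n+2\ge 10$; the same argument rules out loops. Next, $G'$ is class 2 by Lemma~\ref{lem:deco}. The graph $G'$ is obtained from $G$ by deleting two edges and suppressing the resulting degree-2 vertices, so $G'-Z'$ for the new edge structure can be compared with $G-Z$. The cleaner route is a direct argument: any 3-edge-colouring of $G'$ extends to $G$ by the Parity Lemma argument used in the proof of Lemma~\ref{lem:deco}, because $G$ arises from $G'$ by an edge-doubling, so $G$ would be colourable. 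Concretely, $G$ is recovered from $G'$ by an edge doubling on the path through the edge $e_2$ or $e_4$ that survives in $G'$. A colouring of $G'$ colours that path $a,b,c$ with $a\ne b\ne c$. If $a=c$, we colour the new 4-cycle directly. If $a\ne c$, we use the Kempe-chain argument from Lemma~\ref{lem:deco}.

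For the girth of $G'$: a triangle in a cyclically 4-connected cubic graph with more than 4 vertices gives a cyclic 3-cut, so $G'$ has girth at least 4. Hence $G'$ is either a g4-snark or a proper snark, and $G=C(\cdot)$ applied to $G'$. Therefore $G\in M$.

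For b.), let $H$ be a snark on $n$ vertices and let $G$ be the result of doubling $e_2$ in the path $e_1,e_2,e_3$. The new 4-cycle gives girth at most 4. A triangle would require $e_1$ and $e_3$ to share a vertex, i.e.\ a triangle in $H$, which is impossible, and other short cycles in $G$ map to cycles of $H$ that are no longer. Cyclic 4-connectivity follows from Observation~\ref{obs:reduce}. $G$ minus one of the new edges is a subdivision of $H$ with an edge added. More directly, a cyclic cut of size at most 3 in $G$ either separates the 4-cycle, which is impossible since the 4-cycle is cut by at least 4 edges unless one side is just that cycle, or induces a cut of the same size in $H$, contradicting cyclic 4-connectivity of $H$. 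The 4-cycle itself is separated by a 4-cut, so the cyclic connectivity is exactly 4.

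For the class-2 claim: all four results $G$ share the same $G-Z$, namely $H$ with the edges $e_1,e_2,e_3$ path structure removed appropriately. That is, $G-Z$ equals $H$ minus the two endpoints of $e_2$, for every choice. By Lemma~\ref{lem:deco}, $G$ is class 2 if and only if this common graph is class 2, so either all results are class 2 or none is.

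The main obstacle will be the case analysis in a.) on simplicity and cyclic connectivity. Beyond that, the work reduces to the already proven Lemmas~\ref{lem:deco} and~\ref{lem:reduce}.
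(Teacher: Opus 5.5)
Your part a.) is essentially the paper's argument: Lemma~\ref{lem:reduce} gives cyclic 4-connectivity and Lemma~\ref{lem:deco} gives class 2. Your all-or-none argument in b.) is exactly the paper's, since all four doublings of $e_2$ have the same $G-Z=H-\{u,v\}$.

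The genuine gap is the cyclic 4-connectivity of a doubled graph $G$ in b.). Write $e_1=au$, $e_2=uv$, $e_3=vb$, and let $x,y$ subdivide $e_1,e_3$, so that $G-xy$ is a subdivision of $H$. Your dichotomy fails on both branches.
\begin{itemize}
\item A cut separating vertices of $Z$ need only contain two edges of $Z$, so ``the 4-cycle is cut by at least 4 edges'' is false.
\item In the other branch, the induced cut in $H$ is no larger, but nothing makes it \emph{cyclic} in $H$.
\end{itemize}
That second failure is exactly how cyclic connectivity could drop. A side $A$ may contain cycles of $G$ only through the new edge $xy$, e.g.\ $Z$ with trees attached, or a cycle $x\,y\,b\cdots a\,x$ avoiding $u,v$. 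Then $A\setminus\{x,y\}$ is acyclic in $H$, and the cyclic 4-connectivity of $H$ gives no contradiction. Nor can you invoke the second item of Observation~\ref{obs:reduce}, which presupposes that the larger graph is already cyclically $k$-connected.

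The missing idea, which is the whole content of the paper's proof here, is a count.
\begin{itemize}
\item By the first item of Observation~\ref{obs:reduce}, $G-xy$ is cyclically 4-connected.
\item Let $D$ be a cut of $G$ with $|D|\le 3$ and cycles on both sides. If $xy\in D$, then $D\setminus\{xy\}$ is a cyclic cut of $G-xy$ with at most 2 edges, which is impossible.
\item Otherwise $D$ is a cut of $G-xy$ with at most 3 edges. So one side $A$ induces a forest in $G-xy$ but contains a cycle in $G$. Hence $x,y$ lie in a common tree component $T$ of $G[A]-xy$.
\item In $G-xy$, $x$ and $y$ are non-adjacent and have no common neighbour ($a\neq b$ since $H$ has no triangle). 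So $|T|=k\ge 4$.
\item Since $x,y$ have degree 2 in $G-xy$, $T$ has exactly $2\cdot 2+3(k-2)-2(k-1)=k\ge 4$ outgoing edges. All of them lie in $D$, a contradiction.
\end{itemize}

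Smaller points:
\begin{itemize}
\item You never justify ``not necessarily class 2''. One example suffices: doubling a spoke of the Petersen graph turns both 5-cycles of the corresponding 2-factor into 6-cycles, giving an even 2-factor.
\item In a.), $R(\{e_1,e_3\})$ is the edge reduction of \emph{one} edge of the pair, not the deletion of both; deleting both would leave $n-2$ vertices. So $G$ is recovered by doubling the surviving edge of $\{e_1,e_3\}$, not $e_2$ or $e_4$.
\item Your Kempe-chain route to class 2 of $G'$ is correct but unnecessary: $G-Z\subseteq G'$, so Lemma~\ref{lem:deco} applies directly.
\end{itemize}
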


 \begin{proof}

   {\bf a.)}  To show that each g4-snark on $n+2$ vertices can be obtained this way, we prove that each such snark can be reduced to a snark on $n$ vertices by
   the inverse of an edge-doubling operation:

   Let $Z$ be a 4-cycle in a g4-snark $G$ on $n+2$ vertices. Then due to Lemma~\ref{lem:deco} even the subgraph $G-Z$ of the reduced graph is class 2, so it just remains to
   be shown that at least one of the reductions also produces a cyclically 4-connected graph, but this is exactly the statement of Lemma~\ref{lem:reduce}.

  \medskip

 {\bf b.)}  The fact that after the construction applied to a cubic cyclically 4-connected graph $G$, the result $G'$ is a cubic graph with girth exactly 4 is
 obvious. To prove that it is also cyclically 4-connected, assume that edges $e_1,e_3$ are subdivided. The result after subdivision is still cyclically 4-connected due to
 Observation~\ref{obs:reduce}. Inserting a new edge can reduce the cyclic connectivity, if former non-cyclic cuts become cyclic, but as the non-cyclic subgraph with the smallest
 number of outgoing edges that can become cyclic by the new edge already has 4 outgoing edges, $G'$ is cyclically 4-connected.

 The fact that the result of the operation is not necessarily class 2, can already be deduced from the non-existence of snarks on 12 vertices -- so applying the
 edge-doubling operation to an edge of the Petersen graph gives a colourable graph. Lemma~\ref{lem:deco} on the other hand implies that if the result of applying the
 edge-doubling operation to an edge $e$ is class 2, in fact already the graph obtained by removing $e$ is class 2, so no matter how it is extended to a cubic graph -- and especially if
 by one of the four ways in which the operation can be applied -- the result will always be class 2.

 \end{proof}

The most simple and na\"ive way to use Lemma~\ref{lem:weak} to generate g4-snarks on $n$ vertices from snarks on $n-2$ vertices would be to apply the edge doubling
operation in all possible ways to snarks on $n-2$ vertices, filter the result for class 2 graphs and finally use standard programs for isomorphism rejection on the
large lists of graphs generated.
 
Although there are -- for a not too small number $n$ of vertices -- many more g4-snarks than proper snarks on $n$ vertices (see Table~\ref{tab:results}), even
this simple method would take far less time than the generation of the proper snarks on $n$ vertices. Nevertheless this should not be taken as an excuse to not
develop a faster method that can also be distributed on several computers and can generate g4-snarks on $n+2$ vertices if the lists of
proper snarks on $n$ vertices are available. We use the following method implementing McKay's {\em canonical construction path} method \cite{McK96} for this application.  The
reduction operation $R(\{e'_1,e'_2\})$ corresponding to $C(\{e_1,e_2,e_3\})$, is the one with $e'_1$ the edge connecting the two subdivided edges and with
$e'_2=e_2$. This corresponding operation has the property that when applied, the original graph is restored, so we denote
it as $C^{-1}(\{e_1,e_2,e_3\})$.  The method can now be described as follows:

\begin{description}
\item[Definition:] Let $C(\{e_1,e_2,e_3\})$ and $C(\{f_1,f_2,f_3\})$ be construction operations with corresponding reduction operations $R(\{e'_1,e'_2\})$, resp.\  $R(\{f'_1,f'_2\})$
  and let $G_C$, resp.\ $G'_C$ be the results of the operations. If there is an isomorphism
  from $G_C$ to $G'_C$ mapping $\{e'_1,e'_2\}$ onto $\{f'_1,f'_2\}$, then define $C(\{e_1,e_2,e_3\})$ and $C(\{f_1,f_2,f_3\})$ as equivalent.

\item[Step 1:] When applying edge doubling operations to a graph, then compute equivalence classes of operations and apply exactly one operation in each class.
  Lemma~\ref{lem:centraledge} will give a criterion to decide on equivalence without computing the extended graphs.

\item[Definition:] For a cyclically 4-connected graph $G$ with girth exactly 4, let $E_2(G)$ be the set of all edge pairs $\{e_1,e_2\}$, so that when applying
  $R(\{e_1,e_2\})$, the result is cyclically 4-connected.  Define a {\em canonical choice function} choosing an orbit of the automorphism group on $E_2(G)$ --
  that is: If $G,G'$ are isomorphic graphs, then there must be an isomorphism mapping the two chosen orbits onto each other.

 \item[Step 2:]  If a graph $G'$ is constructed by an operation $C(\{e_1,e_2,e_3\})$, then accept the graph if and only if it is class 2 and the two defining edges of
    $C^{-1}(\{e_1,e_2,e_3\})$ are an edge pair in the canonical orbit.

\end{description}

\begin{lemma}\label{lem:centraledge}

  Two construction operations $C(\{e_1,e_2,e_3\})$ and $C(\{f_1,f_2,f_3\})$ of a cubic graph $G=(V,E)$ with girth 4 are equivalent, if and only if 
  there is an automorphism $\phi()$ of $G$ mapping $e_2$ to $f_2$
  so that $|\{\phi(e_1),\phi(e_3)\} \cap \{f_1,f_3\}| \in \{0,2\}$.

\end{lemma}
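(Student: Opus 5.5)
The plan is to make both directions explicit through one fixed piece of notation and a single ``swap'' isomorphism of the extended graph. Let $e_2=uv$, let $a,a'$ be the other two edges at $u$, and let $b,b'$ be the other two edges at $v$. A construction operation with central edge $e_2$ picks one edge at $u$ and one at $v$, say $e_1=a$ and $e_3=b$. In $G_C$ we then have new vertices $x$ on $a$ and $y$ on $b$, a new edge $xy$, and the 4-cycle $Z=u\,x\,y\,v$. The edge pair of the reduction $C^{-1}$ is $\{xy,uv\}$. The neighbours are as follows: $u$ is adjacent to $x$, $v$ and the far end of $a'$, while $x$ is adjacent to $u$, $y$ and the far end of $a$; symmetrically for $v$ and $y$. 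Hence the map $\tau$ exchanging $u\leftrightarrow x$ and $v\leftrightarrow y$, and fixing everything else, is an isomorphism from the result of $C(\{a,e_2,b\})$ to the result of $C(\{a',e_2,b'\})$. Moreover $\tau$ maps the edge pair $\{xy,uv\}$ onto the edge pair of the second operation, with the roles of its two edges exchanged. This formalises the remark that the four operations at $e_2$ fall into two pairs with isomorphic results.

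For ``$\Leftarrow$'', take an automorphism $\phi$ of $G$ with $\phi(e_2)=f_2$.
\begin{itemize}
\item Since $\phi$ maps edges at an endpoint of $e_2$ to edges at the corresponding endpoint of $f_2$, the set $\{\phi(e_1),\phi(e_3)\}$ is again one edge at each end of $f_2$.
\item $\phi$ extends in the obvious way (subdivision vertices to subdivision vertices) to an isomorphism from $G_C$ to the result of $C(\{\phi(e_1),f_2,\phi(e_3)\})$. This extension maps $\{xy,uv\}$ to the corresponding edge pair.
\item If the intersection with $\{f_1,f_3\}$ has size $2$, we are done. If it has size $0$, then $\{\phi(e_1),\phi(e_3)\}$ is the complementary choice at both ends of $f_2$, and composing with $\tau$ gives the required isomorphism.
\end{itemize}

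For ``$\Rightarrow$'', let $\psi\colon G_C\to G'_C$ be an isomorphism mapping $\{xy,uv\}$ onto $\{x'y',u'v'\}$.
\begin{itemize}
\item Composing with $\tau$ if necessary (which turns a choice of edges at $f_2$ into the complementary choice at both ends), we may assume $\psi(xy)=x'y'$, and hence $\psi(uv)=u'v'$.
\item Then $\psi$ maps $\{x,y\}$ onto $\{x',y'\}$ and maps the two non-cycle edges at $x,y$ onto those at $x',y'$.
\item Deleting $xy$ and suppressing $x,y$ gives back $G$ on both sides. Therefore $\psi$ descends to an automorphism $\phi$ of $G$ with $\phi(e_2)=f_2$, and $\phi$ maps the subdivided edges $\{e_1,e_3\}$ onto $\{f_1,f_3\}$, so the intersection has size $2$.
\item If $\tau$ was used, the descended automorphism instead maps $\{e_1,e_3\}$ onto the complementary pair at $f_2$, so the intersection has size $0$.
\end{itemize}
In either case the condition of the lemma holds. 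One further point needs checking: the case $\psi(xy)=u'v'$ must genuinely be covered by composing with $\tau$. This holds because $\tau$ swaps the two edges of the pair.

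The main obstacle is making the descent step rigorous. I must check that suppressing the degree-2 vertices produces exactly $G$, with edges identified, and not merely a graph isomorphic to $G$. I must also check that the edge of $G$ obtained from $a$ (respectively $b$) is really mapped to the edge obtained from the image. Simplicity of $G$ is needed so that edges are determined by their endpoints; the relevant graphs are cyclically 4-connected, so this holds and $e_1\neq e_3$ share no endpoint. I would do this bookkeeping carefully on vertices, identifying each suppressed path $\text{end}(a)\,x\,u$ with the edge $a$.
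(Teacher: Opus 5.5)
Your proof is correct and follows essentially the paper's route: for $\Leftarrow$ you use the same swap isomorphism (the identity outside the new 4-cycle), composed with the extension of $\phi$, and for $\Rightarrow$ you likewise restrict the isomorphism of the extended graphs to an automorphism of $G$. The only difference is cosmetic: you first normalise with $\tau$ so that $\psi(xy)=x'y'$ and then suppress $x,y$, whereas the paper restricts to $V\setminus e_2$, re-extends to the endpoints of $e_2$, and reads off the $\{0,2\}$ condition from the paths of length 3 through the new 4-cycle.
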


\begin{proof}

  {\bf $\Leftarrow$:}\\
  The result is obvious if $\{e_1,e_3\}=\{f_1,f_3\}$, so
  assume an automorphism $\phi()$ of $G$ mapping $e_2$ to $f_2$ so that $|\{\phi(e_1),\phi(e_3)\} \cap \{f_1,f_3\}| \in \{0,2\}$ to be given.\\

  If $\phi()$ is the identity, then it is easy to check that the results of applying $C(\{e_1,e_2,e_3\})$ and $C(\{e'_1,e_2,e'_3\})$ give isomorphic results
  if $\{e_1,e_3\}= \{e'_1,e'_3\}$ or
  $\{e_1,e_3\}\cap  \{e'_1,e'_3\}=\emptyset$. On the set  $V\setminus e_2$, the isomorphism between the extended graphs is in fact still the identity.
  Furthermore it is obvious that in that case the sets of defining edges for the inverse operation are mapped onto each other, so that
  $C(\{e_1,e_2,e_3\})$ and $C(\{e'_1,e_2,e'_3\})$ are equivalent.

  If we now have a nontrivial automorphism $\phi()$, then $C(\{e_1,e_2,e_3\})$ and $C(\{\phi(e_1),\phi(e_2),\phi(e_3)\})$ give results that are isomorphic by
  the unique isomorphism extending $\phi()|_{V\setminus e_2}$, and due to the first part also $C(\{f_1,\phi(e_2),f_3\})$ with $f_1,\phi(e_2),f_3$ a path in $G$
  with $|\{\phi(e_1),\phi(e_3)\} \cap \{f_1,f_3\}| \in \{0,2\}$.

  \bigskip

  {\bf $\Rightarrow$:}\\
  Assume now that two construction operations $C(\{e_1,e_2,e_3\})$ and $C(\{f_1,f_2,f_3\})$ of a graph $G=(V,E)$ are equivalent.\\

  So there is an isomorphism $\phi'()$ mapping the results of $C(\{e_1,e_2,e_3\})$ and $C(\{f_1,f_2,f_3\})$ applied to $G$ onto each other with the property
  that $\phi'(\{e'_1,e'_2\})=\{f'_1,f'_2\}$ with $\{e'_1,e'_2\}$, resp.\ $\{f'_1,f'_2\}$ the edge pairs defining the corresponding reductions.  
  For the automorphism of $G$, we choose $\phi(v)=\phi'(v)$ for all vertices not in $e_2$. With the notation of
  Figure~\ref{fig:weakconstruct} (and analogously for $f_1,f_2,f_3$) we have that  $\phi'(\{v_{e,1},w_{e,1}\})=\{v_{f,1},w_{f,1}\}$ and   $\phi'(\{v_{e,3},w_{e,3}\})=\{v_{f,3},w_{f,3}\}$
  or $\phi'(\{v_{e,1},w_{e,1}\})=\{v_{f,3},w_{f,3}\}$ and   $\phi'(\{v_{e,3},w_{e,3}\})=\{v_{f,1},w_{f,1}\}$, as these pairs of vertices are connected through the new 4-cycle by a path of length 3 not containing an edge
  of the pair defining the reduction. In both cases we can extend the partial automorphism $\phi()$ to $e_2$.

  On the other hand we can use that also paths of length 3 with the middle edge an edge in the pair defining the reduction $C^{-1}(\{e_1,e_2,e_3\})$ must be mapped
  onto paths with the same property for $C^{-1}(\{f_1,f_2,f_3\})$, which implies $\phi'(\{v_{e,1},v_{e,3}\})=\{v_{f,1},v_{f,3}\}$ or
  $\phi'(\{v_{e,1},v_{e,3}\})=\{w_{f,1},w_{f,3}\}$.  But as the condition $|\{\phi(e_1),\phi(e_3)\} \cap
  \{f_1,f_3\}| \in \{0,2\}$ we have to prove is equivalent to $|\phi'(\{v_{e,1},v_{e,3}\})\cap \{v_{f,1},v_{f,3}\}|\in \{0,2\}$, we are done.

  \end{proof}

\begin{lemma}\label{lem:only1}

  The algorithm described above applied to one graph of each isomorphism class of (g4- or proper) snarks on $n-2$ vertices,
  accepts exactly one graph from each isomorphism class of g4-snarks on $n$ vertices.

\end{lemma}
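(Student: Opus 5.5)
The proof follows the usual canonical construction path argument of McKay, split into existence (at least one copy of each isomorphism class is accepted) and uniqueness (at most one).

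\textbf{Existence.} Let $G$ be a g4-snark on $n$ vertices. First I check that the canonical orbit is well defined. By Lemma~\ref{lem:reduce} (since $n\ge 10$), $E_2(G)$ is nonempty, so the canonical choice function selects an orbit $O\subseteq E_2(G)$. Take $\{e'_1,e'_2\}\in O$ and let $G^-$ be the result of $R(\{e'_1,e'_2\})$.

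By Lemma~\ref{lem:deco}, $G^-$ is class 2: $G-Z$ is class 2 and is a subgraph of $G^-$. By the definition of $E_2(G)$, $G^-$ is cyclically 4-connected. So $G^-$ is a g4- or proper snark on $n-2$ vertices, and it is isomorphic to the chosen representative $H$ of its class.

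Transporting the situation along an isomorphism $G^-\to H$ gives a construction $C(\{e_1,e_2,e_3\})$ on $H$ whose result is isomorphic to $G$, with the inverse pair mapped into $O$. Step~1 applies exactly one operation from each equivalence class. The applied operation $C(\{f_1,f_2,f_3\})$ in this class yields a graph $G'$ together with an isomorphism $G\to G'$ mapping the inverse pair of the first operation onto that of the applied one. Since the canonical orbit is preserved by isomorphisms, the inverse pair of the applied operation lies in the canonical orbit of $G'$. Also $G'\cong G$ is class 2, so it is accepted.

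\textbf{Uniqueness.} Suppose $G_1\cong G_2$ are accepted, arising from $H_1$ and $H_2$ by operations with inverse pairs $p_1,p_2$. Both pairs lie in the canonical orbits. By the defining property of the canonical choice function, some isomorphism $\psi: G_1\to G_2$ maps the canonical orbit onto the canonical orbit. Composing with an automorphism of $G_2$, I may assume $\psi(p_1)=p_2$.

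The reduction is determined by the graph and the pair, so $H_1\cong H_2$ via the induced map. Since only one representative per class is processed, $H_1=H_2=H$, and $\psi$ restricts to an automorphism of $H$. Thus the two operations are equivalent in the sense of the definition, witnessed by $\psi$ mapping $p_1$ onto $p_2$. By Step~1 only one operation per equivalence class is applied, so they are the same operation and $G_1=G_2$ as outputs.

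\textbf{Main obstacle.} The delicate point is the last step: ensuring the equivalence used in Step~1 really coincides with the definition, i.e.\ that the operation classes are computed correctly. Lemma~\ref{lem:centraledge} is what guarantees that the automorphism-based test in Step~1 matches the definition of equivalence, so I would invoke it there. A minor check is that applying $R$ to a pair $p$ and then $C$ in the corresponding way, together with $C^{-1}$, are mutually inverse up to the obvious identification. This holds by the construction of $C^{-1}$ in the paper.
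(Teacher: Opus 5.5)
Your proof is correct and follows essentially the same route as the paper. Existence comes from reducing a pair in the canonical orbit, with Lemmas~\ref{lem:reduce} and~\ref{lem:deco} guaranteeing the parent is a snark on $n-2$ vertices, and then applying the representative of the equivalent operation; uniqueness comes from showing that isomorphic accepted graphs have isomorphic, hence identical, parents and arise from equivalent operations. You also make explicit a step the paper leaves implicit, namely composing with an automorphism so that $\psi(p_1)=p_2$, which is exactly what the definition of equivalence requires (your aside that $\psi$ ``restricts'' to an automorphism of $H$ is loose, since $\psi$ may swap the two edges of the pair, but it is not needed).
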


\begin{proof}

  Let $G$ be a g4-snark on $n$ vertices and $\{e_1,e_2\}$ an edge pair in the canonical orbit. Then $R(\{e_1,e_2\})$ gives a snark on $n-2$ vertices and there is a path
  (in fact even two paths) $e'_1,e'_2,e'_3$, so that $C^{-1}(\{e'_1,e'_2,e'_3\})=R(\{e_1,e_2\})$. As we apply an operation from the equivalence class of $C(\{e'_1,e'_2,e'_3\})$,
  we obtain a graph isomorphic to $G$, which will be accepted, so we get a graph isomorphic to $G$.

  It remains to be shown that we do not get two isomorphic copies of the same graph.

  Assume that two isomorphic graphs $G,G'$ are accepted and that they are constructed from $G_a$, resp.\ $G'_a$. So there is an isomorphism $\phi()$ mapping $G$
  onto $G'$ and the canonical orbits onto each other. Applying a canonical reduction to any edge pair in a canonical orbit in $G$ or $G'$, we get -- up to isomorphisms --
  the same graph, so $G_a$ and $G'_a$ are isomorphic and as we apply the construction operation to only one graph from each isomorphism class, we have that $G_a=G'_a$
  and by definition the two operations are equivalent, so that only one was applied -- a contradiction.

\end{proof}

To implement this algorithm, we used {\em nauty} \cite{nauty2} -- an often used and very well tested
computer program that can compute the automorphism group of a graph and also give a {\em
  canonical labelling of the vertices}.  A canonical labelling of the vertices is a function that assigns numbers to the vertices, so that any two isomorphic
graphs are transformed into the same labelled graph. The automorphism group can be used to compute the orbits of edges for the graphs to which the construction is applied.
A canonical labelling of the vertices can not only be used to decide on a canonical vertex, but also e.g. on a {\em canonical edge pair} (e.g. the lexicographically smallest one
where the reduction produces a cyclically 4-connected graph). As we also know the automorphism group, the {\em canonical orbit} can be chosen as the orbit of the chosen edge pair.
Often, some fast heuristics can be used to decide on the orbit of the canonical edge pair even without applying nauty -- e.g. if the graph has only one 4-gon.

In order to determine which edges must be doubled to get a larger class 2 graph, we use the following lemma:

\begin{lemma}\label{lem:double}

  Let $G$ be a snark and $e$ be an edge of $G$. The result $G'$ of doubling $e$ is colourable if and only if $G$ has a 2-factor with exactly 2 odd cycles $C_1,C_2$ and $e$
  contains a vertex of both odd cycles.

\end{lemma}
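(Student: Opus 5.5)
By Lemma~\ref{lem:deco}, and by the observation in the proof of Lemma~\ref{lem:weak}~b.), the graph $G'$ obtained by doubling $e=xy$ is colourable if and only if $G-e$ (with $x,y$ of degree 2) is 3-edge-colourable. So the plan is to characterise 3-edge-colourability of $G-e$ in terms of 2-factors of $G$, and to prove each direction separately.

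\textbf{Setup.} Write $e=xy$. In a 3-edge-colouring of $G-e$, every vertex other than $x,y$ sees all three colours, while $x$ and $y$ each miss exactly one colour. By the Parity Lemma applied to $G-e$ (equivalently, a counting argument on each colour class), $x$ and $y$ miss the same colour. If that missing colour were the same \emph{and} we could add $e$ with it, $G$ would be colourable, which is impossible. So what we must understand is how colourings of $G-e$ in which $x$ and $y$ both miss colour $3$ relate to 2-factors of $G$.

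\textbf{($\Leftarrow$).} Let $F$ be a 2-factor of $G$ with exactly two odd cycles $C_1,C_2$, where $x\in C_1$ and $y\in C_2$. Then $e\notin F$, because its endpoints lie on different cycles of $F$, so $e$ belongs to the complementary perfect matching $M$. Colour $M-e$ with colour $3$ and colour each even cycle of $F$ alternately with colours $1,2$. For the odd cycle $C_1$, delete nothing but treat $x$ as the defect: colour the path $C_1$ alternately with $1,2$ starting and ending at $x$, so the two edges at $x$ get $1$ and $2$. Colour $C_2$ in the same way around $y$. The cycle edges then form a proper colouring everywhere except at the single place on each odd cycle where two consecutive edges share a colour.

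This does not yet work, so I would fix it as follows. Recolour one edge of $C_1$ at $x$ with colour $3$; this is legal because $x$ has no colour-$3$ matching edge once $e$ is removed. Then $C_1-x$ minus that edge is an even-length path, which can be coloured alternately $1,2$. Do the same on $C_2$ at $y$. Checking the degrees confirms this yields a proper 3-edge-colouring of $G-e$, so $G'$ is colourable.

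\textbf{($\Rightarrow$).} Take a 3-edge-colouring of $G-e$, and suppose $x$ and $y$ both miss colour $3$. Then colour class $3$ together with $e$ is a perfect matching $M$ of $G$, and $F=E(G)\setminus M$ is a 2-factor whose edges are coloured $1,2$ except at $x$ and $y$, each of which has two incident $F$-edges. I am not yet sure the edges at $x$ and $y$ carry colours $1$ and $2$ rather than $3$, so this configuration needs an alternative. The more robust route is to take the colour classes directly: if $x,y$ miss colour $c$, let $M'$ be colour class $c$ plus $e$. This is a perfect matching of $G$, and $G-M'$ is a 2-factor $F$ properly 2-coloured everywhere except at $x$ and $y$, which both miss $c$ and are therefore incident to colours $a$ and $b$ in $F$, so the cycles through them are also alternating there.

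Hence every cycle of $F$ not through $x$ or $y$ is even. A cycle through both $x$ and $y$ would be properly 2-coloured and hence even, and then $G$ would be colourable, a contradiction. So $x$ and $y$ lie on different cycles $C_1\ni x$ and $C_2\ni y$. If $C_1$ were even, a Kempe-type swap would give a colouring of $G$; equivalently, a 2-factor with all cycles even gives a colouring of $G$, contradicting that $G$ is a snark. Since the number of odd cycles is even, $C_1$ and $C_2$ must both be odd and all other cycles even.

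\textbf{Main obstacle.} The main obstacle is the bookkeeping of colours at $x$ and $y$ in both directions, in particular making sure the colouring in ($\Leftarrow$) is proper at the neighbours of the recoloured edge. I would verify this carefully using the vertex labels of Figure~\ref{fig:weakconstruct}.
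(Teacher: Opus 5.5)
The proposal breaks at its first step. The claimed reduction, that $G'$ is colourable if and only if $G-e$ is 3-edge-colourable, is false.

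\textbf{Why the reduction fails.} Your own Setup paragraph contains the reason. In any 3-edge-colouring of $G-e$, parity forces $x$ and $y$ to miss the same colour, and then $e$ can simply be given that colour. So $G-e$ is 3-edge-colourable exactly when $G$ is. For a snark, $G-e$ is therefore \emph{never} colourable, and your reduction would say that doubling any edge of a snark gives a class 2 graph. That contradicts the remark in the proof of Lemma~\ref{lem:weak}~b.) that doubling an edge of the Petersen graph gives a colourable graph. What Lemma~\ref{lem:deco} actually gives is that $G'$ is colourable if and only if $G'-Z=G-\{x,y\}$ is, with both endpoints of $e$ deleted. This is also the graph meant in the observation you cite, and it is not interchangeable with $G-e$.

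\textbf{How this affects both directions.} In ($\Leftarrow$), giving an edge $xa$ of $C_1$ colour $3$ is fine at $x$ but not at $a$. Since $a\notin\{x,y\}$, its $M$-edge lies in $M-e$ and already has colour $3$. This is not bookkeeping to check later: it cannot be repaired, because no colouring of $G-e$ exists. In ($\Rightarrow$) you start from a colouring that does not exist, and your argument in fact derives a contradiction rather than the desired structure. You show every cycle of $F$ is properly 2-coloured, hence even, and then still conclude that $C_1$ and $C_2$ are odd.

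\textbf{How to repair it.} Work in $G'$ itself, as the paper does. Let the doubling subdivide $xa$ by a new vertex $u$ and $yb$ by a new vertex $w$, and add the edge $uw$.
\begin{itemize}
\item[($\Leftarrow$)] Since $e\notin F$, both other edges at $x$ lie on $C_1$ and both other edges at $y$ lie on $C_2$. So $xa\in C_1$ and $yb\in C_2$. Subdividing them makes both cycles even, and $M\cup\{uw\}$ is the complementary perfect matching. Hence $G'$ has an even 2-factor and is colourable.
\item[($\Rightarrow$)] In a colouring of $G'$, the two colours other than the colour of $e$ form an even 2-factor $F'$ of $G'$ containing $xu$ and $yw$.
\begin{itemize}
\item[(a)] If $uw\notin F'$, suppress $u$ and $w$. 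Either one cycle shortens by $2$, giving an even 2-factor of $G$, which is impossible. Or two cycles shorten by $1$ each, giving exactly two odd cycles, one through $x$ and one through $y$.
\item[(b)] If $uw\in F'$, replace the path $x\,u\,w\,y$ by $e$. This gives an even 2-factor of $G$, again impossible.
\end{itemize}
\end{itemize}
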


\begin{figure}[tb]
	\centering
	\includegraphics[width=0.16\textwidth]{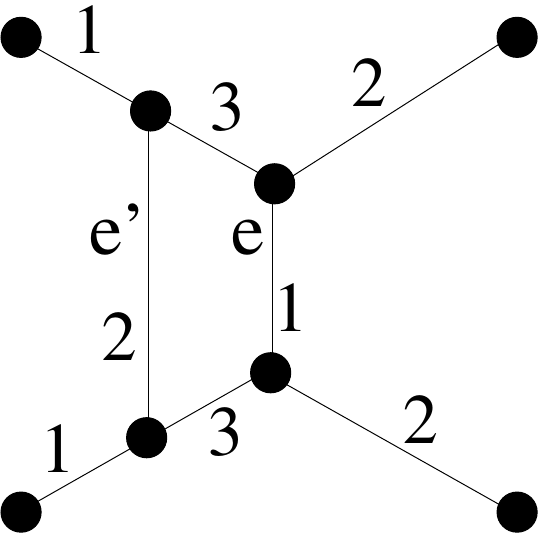}
	\caption{An illustration of an argument in the proof of Lemma~\ref{lem:double}.}
	\label{fig:lem}
\end{figure}

\begin{proof}

  $\Leftarrow$: If $e$ contains a vertex of both, $C_1,C_2$, then $e$ is itself not on that cycle, so one edge that will be subdivided is on $C_1$ and one on $C_2$.
  When the edges are subdivided, $C_1$ and $C_2$ become even cycles, so we have an {\em even 2-factor} -- a 2-factor with only even cycles -- so the graph is colourable.

  $\Rightarrow$: If $G'$ is colourable, the two colours different from that of edge $e$ define an even 2-factor $F$ and the endpoints of $e$ neighbour those of
  the new edge $e'$ on $F$. If $e'$ is not contained in $F$, the endpoints lie on different cycles of $F$, as otherwise removing $e$ and the subdivisions of the
  edges would give an even 2-factor of $G$ -- a contradiction.  So removing $e'$ and the subdividing vertices gives a 2-factor with two odd cycles -- each
  containing one of the two formerly subdivided edges and therefore a vertex of $e$.

  Assume now that $e'$ lies on $F$, so it has a different colour than $e$ and the situation is up to a permutation of colours as in Figure~\ref{fig:lem}. In this case
  removing $e'$ and the adjacent edges coloured 3 from the 2-factor and adding edge $e$ would give an even 2-factor of $G$ -- a contradiction.

\end{proof}

We compute all 2-factors with exactly two odd cycles for $G$, and for each of them mark all edges between those odd cycles. Afterwards we only double unmarked edges, so that all larger graphs
are class 2 and do not have to be tested.

\section{Proper Snarks -- the algorithm for the program {\em minisnark}}

Generating proper snarks is the computationally far more challenging and time consuming part. Already former generators like {\em minibaum} \cite{minibaum} or {\em
  snarkhunter} \cite{snarkhunter} used that cubic graphs are class 1 if and only if they have an even 2-factor, but minibaum used this equivalence only for a
final test before output.  The construction method used for {\em minibaum} works by inserting edges joining already existing vertices.  {\em In principal} this
has the nice property that if during the construction an even 2-factor is detected, the construction can stop and backtrack, as all successors on the same
number of vertices will also have the same even 2-factor and therefore be class 1. Unfortunately the method of isomorphism rejection used in minibaum requires a
breadth first order in which the edges are inserted, so that often the graph does not have any 2-factors at all -- let alone even ones -- before
by far most of the edges are inserted and the possible detection of an even 2-factor comes much too late to speed up the generation.

The most important construction operation in snarkhunter is similar to the construction operation in Figure~\ref{fig:weakconstruct} (see also Section~\ref{sec:strong}):
two edges (not necessarily adjacent to
the same third edge like in Section~\ref{sec:weak}) are subdivided and the new centers are connected. This can destroy even 2-factors, as
subdividing one edge in a cycle in an even 2-factor makes it an odd cycle. What seems to be a disadvantage can also be used: the strategy of snarkhunter when
generating snarks on $n$ vertices is to compute even 2-factors for the graphs on $n-2$ vertices and only apply construction operations that destroy those
2-factors. This does not guarantee that only class 2 graphs are generated, as not all even 2-factors for the graph on n-2 vertices are computed and as in fact
the last edge added can even create a new even 2-factor. Nevertheless it dramatically increases the fraction of class 2 graphs among all graphs generated.

The present approach was inspired by the one in \cite{permutationsnarks} generating permutation snarks. Permutation snarks are defined as snarks that have a
2-factor of two induced (so chordless) cycles of length $|V|/2$ -- and these special chordless cycles are used for the construction. Unfortunately we can not
rely on the existence of an induced 2-factor, but due to Petersen's matching theorem, each bridgeless cubic graph has a perfect matching, so also an (in general
not induced) 2-factor. To this end, we can start our construction with a 2-factor and then insert additional edges to make the graph cubic. In order to have an
independent approach from \cite{snarkhunter}, we do not use the result from \cite{oddness4}, which states that for fewer than 44 vertices, each snark has a
2-factor with only 2 odd cycles. This would only give a relatively small speedup that does not justify the dependency.

For a given ordered set $F=\{C_1,C_2,\dots ,C_{k+m}\}$ of cycles where $C_1,\dots ,C_k$ have odd lengths $l_1\le l_2\le \dots \le l_k$, and $C_{k+1},\dots ,C_{k+m}$
have even lengths
$l_{k+1}\le l_{k+2}\le \dots \le l_{k+m}$, we call $(l_1, \dots ,l_{k+m})$ the {\em type} of the set of cycles and in case the cycles form a 2-factor, the type of the 2-factor.
A {\em legal labelling} of $F$ is a labelling of the vertices with $1,\dots ,\sum_{i=1}^{k+m}l_i$, so that for $i<j$ the labels of
vertices in $C_i$ are smaller than labels of vertices in $C_j$ and where each cycle is labelled in the order of the cycle. An example for cycle lengths
$5,7,4,6$ is given in Figure~\ref{fig:legallabel}.  We will not make a distinction between {\em vertex labels} and the (pairwise distinct) labels in fact being
the vertices, so that a labelling can also be considered as an isomorphism onto a graph where the labels are the vertices. Whenever we refer to an {\em ordered
  2-factor}, we assume the ordering to be as described above.
 
\begin{figure}[tb]
	\centering
	\includegraphics[width=0.45\textwidth]{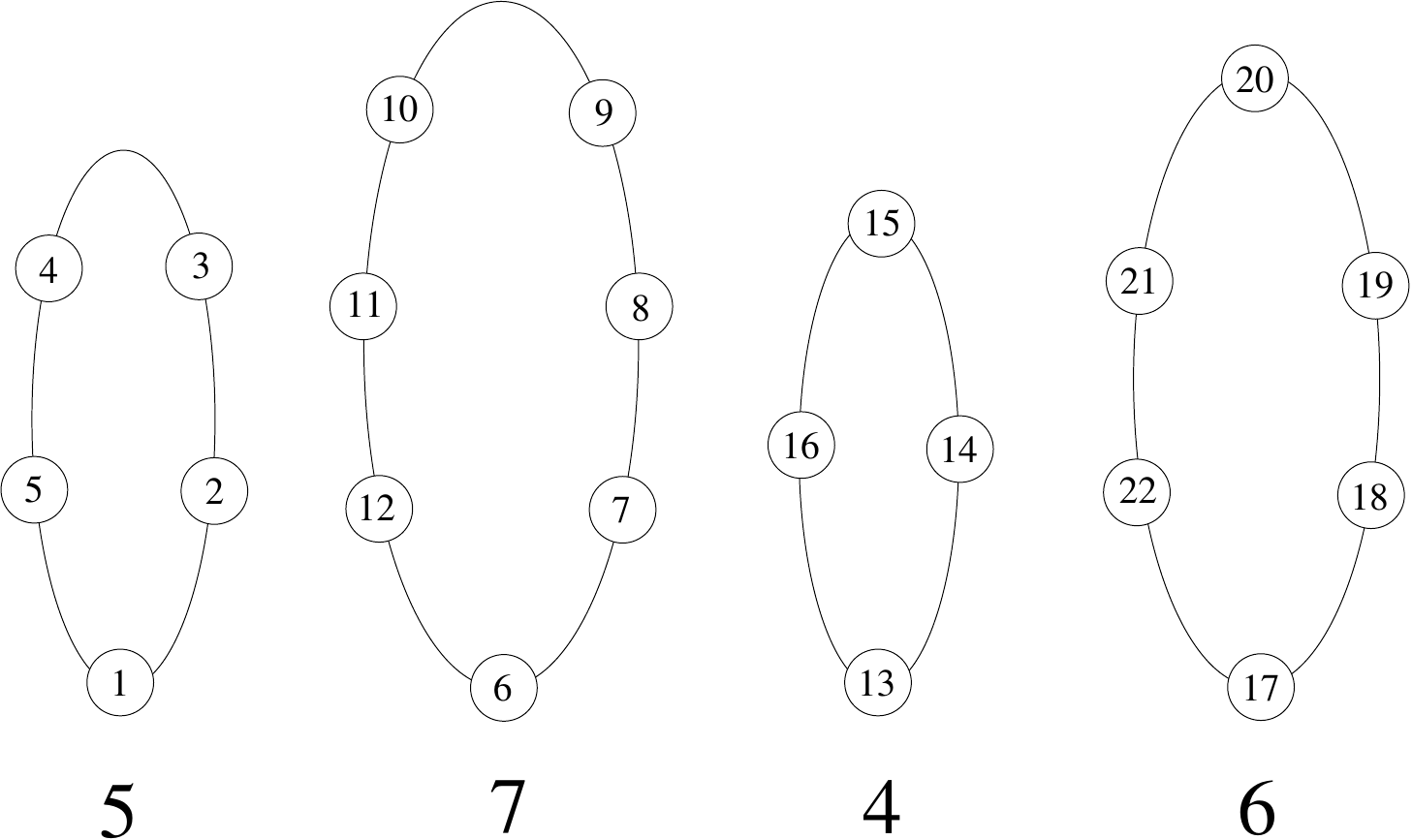}
	\caption{A legal labelling for cycles with lengths $5,7,4,6$.}
	\label{fig:legallabel}
\end{figure}

The (very) basic description of the construction algorithm for $n$ vertices and girth $g$ is as follows. We say that an edge $\{v,w\}$ is lexicographically
smaller than a disjoint edge $\{v',w'\}$, if $\min\{v,w\} < \min\{v',w'\} $.

Assume that lower bounds $g$ for the girth and $n$ for the number of vertices are given.

\begin{description}

\item[a.)] Enumerate all ordered sets of 2-factors with at least 2 odd cycles, where each cycle has length at least $g$ and where the sum of all cycle lengths is $n$.

\item [b.)] For each such 2-factor $F$, construct the initial 2-regular graph consisting of exactly the cycles described in the 2-factor and labelled with a legal labelling.

\item[c.)] For each such initial graph, recursively insert edges in lexicographic order between vertices of degree 2 in each possible way that does not create cycles of length smaller
  than $g$ until the graph is cubic. After each insertion of an edge $e$:
  \begin{description}
  \item [(i)] Check whether $e$ lies on an even 2-factor and if yes, backtrack.
    \end{description}

\end{description}

Implementing this basic algorithm and removing isomorphic graphs by using standard programs that remove isomorphic copies from lists of graphs, would give a very slow
program -- even if the computation of 2-factors, which would be the crucial part, was implemented very efficiently. It is essential that isomorphism rejection
is already used to decrease the number of graphs long before the completion to a cubic graph.

Different from our algorithm for g4-snarks, in this part we use {\em orderly generation} in the sense of Farad{\v z}ev and Read \cite{Fa76,Read78} for
isomorphism rejection.
In short (and admittedly not quite exact), orderly generation can be described as assigning a unique description of a graph as {\em the
  canonical description} of the graph -- typically by using string representations and defining the lexicographically minimal (or maximal) string as the
canonical one. Then the structures together with the corresponding string representations are constructed (typically in lexicographic order) and the graphs are accepted
if and only if the same graph can not be described by a smaller (resp.\ larger) string. We will describe the details for our algorithm, in which we will define the
lexicographically smallest string to be the canonical one.

Assume a graph with a 2-factor $F$ to be given. Let the odd cycle lengths in $F$ be $l^o_1\ge l^o_2\ge \dots \ge l^o_k$, and the even cycle lengths be
$l^e_1\ge l^e_2\ge \dots \ge l^e_j$. Note that the ordering of the even and odd lengths is opposite to the one defining the type. Then the string representing of the graph
starts with $S_1(F)=-l^o_1,\dots ,-l^o_k,0,-l^e_1,\dots ,-l^e_j,0$. The minus sign is to be able to later just take the lexicographically smallest string instead of
choosing some parts of the string to be maximal and others minimal.

In step b.) of the algorithm, this part $S_1(F_0)$ of the representation of all graphs constructed for this initial 2-factor $F_0$ is already fixed. To this end,
we can add a second bounding criterion to step c.):

 \begin{description}
  \item [(ii)] Check whether $e$ lies on a 2-factor $F'$ with $S_1(F')<S_1(F_0)$ and if yes, backtrack.
 \end{description}

 Of course (i) and (ii) can be tested together when searching for 2-factors. So the same 2-factor computing function can be used for bounding with respect to
 two different criteria: once that the resulting graph will be colourable and once that it will not be canonical. Although the total running time for $38$
 vertices is many years, the part for the initial 2-factor $F_0$ of type $(5,5,5,5,5,7,6)$ took less than 2 minutes, as even 2-factors or 2-factors $F'$ with
 $S_1(F')<S_1(F_0)$ turned up very early.

 \medskip

 For the second part of the representing string, we assume an ordered 2-factor $F$ and a legal labelling $L$ of $F$ to be given. The remaining entries of the
 string -- denoted as $S_2(F,L)$ --
 are the neighbours of $1,2,\dots ,n$ (in that order) that are not along edges of $F$. Examples of such representing strings are given in Figures~\ref{fig:blanusa0}
 and \ref{fig:blanusa}, where representing strings and corresponding labellings of one of the Blanusa snarks are given. If the graph is not yet 3-regular, we still use the
 notation $S_2(F,L)$, but it has some undefined entries. We denote the original labelling -- in fact the vertex numbers used in the algorithm -- as $L_0$.
 
\begin{figure}[tb]
	\centering
        \resizebox{0.5\textwidth}{0.5\textwidth}
       {
          \input{blanusa_0.tikz}
          }
	\caption{One of the Blanusa snarks drawn on the torus with a legal (but not canonical) labelling corresponding to a 2-factor of type 
          $(5,5,8)$. The corresponding string for this 2-factor and labelling is $-5,-5,0,-8,0,6,8,10,11,13,1,15,2,17,3,4,16,5,18,7,12,9,14$. The edges of the
          defining 2-factor are black and the edges in the remaining matching are green. In Figure~\ref{fig:blanusa}, the same graph with a canonical labelling is
          given.}
	\label{fig:blanusa0}
\end{figure}
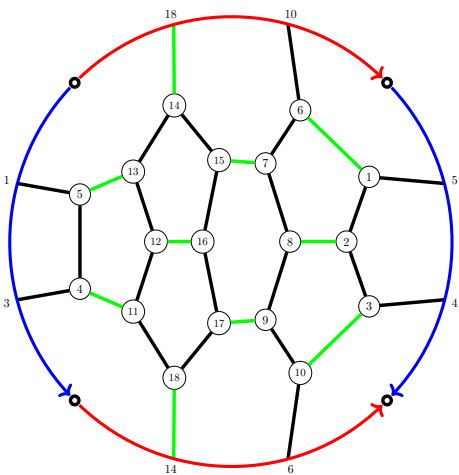
 
\begin{figure}[tb]
	\centering
        \resizebox{0.5\textwidth}{0.5\textwidth}
       {
          \input{blanusa.tikz}
          }
	\caption{One of the Blanusa snarks drawn on the torus with a canonical labelling. The canonical string is
          $-13,-5,0,0,6,8,10,7,13,1,4,2,16,3,15,17,5,18,11,9,12,14$. The edges of the defining canonical 2-factor are black and the edges in the remaining matching
          are green.}
	\label{fig:blanusa}
\end{figure}
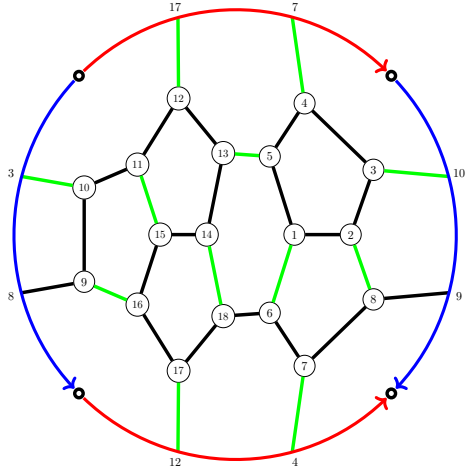

Also non-canonicity due to this part of the string can sometimes be detected before all edges are inserted. As long as not all edges are inserted, even for a given
legally labelled 2-factor, the second part of the representing string is not completely defined. Entries for vertices with degree 2 are not yet determined. We
say that a prefix of the second part is {\em complete}, if all entries of the prefix are already defined.  We can now 
add a third bounding criterion to step c.):

 \begin{description}
 \item [(iii)] Check whether there is a 2-factor $F'$ with $S_1(F')=S_1(F_0)$ and a labelling $L'$ of $F'$ so that a complete prefix of $S_2(F',L')$ is lexicographically smaller
   than the prefix of the same length of $S_2(F_0,L_0)$. If yes, backtrack.
 \end{description}

 Checking all legal labellings of a 2-factor seems to be an extremely time consuming task, but in most cases one can restrict the number of labellings to be
 tested a lot.  It turned out (see Table~\ref{tab:results2fc}) that -- at least inside the reach of the program -- almost all proper snarks on $n$ vertices have a
 2-factor of type $(5,n-5)$, which is due to our choice of $S_1()$ then also the canonical 2-factor.  For $n>10$ this means that for such graphs, trying to
 construct a minimal string, one has only 5 possibilities to give label 1 and two directions to continue the labeling. The neighbour of the vertex labelled 1
 must get label 6 -- otherwise the resulting string would not be minimal -- so except for the direction, the labelling of the second cycle is already determined
 by the choice for label 1. The choice of direction for the 5-cycle also implies which vertex gets label 2 -- and the neighbour of that vertex on the second
 cycle uniquely determines the direction. So in this case there are only 10 labellings to be checked and many of them can already be discarded after comparing
 two entries.  For other initial 2-factors, it can happen that due to the order of the cycles, also for another cycle than the first one all vertices must be
 tried as smallest vertices in that cycle, but these cases are often rejected early in the recursion.

 Nevertheless, applying the test always and in a straightforward way, would be much too expensive. Two key ideas were necessary to speed up things and 
 make the corresponding program fast enough to generate all proper snarks on 38 vertices:

 {\bf a.)} When the canonicity test fails, one can backtrack and therefore avoid the work in all subsequent branches of the recursion tree.
 The more non-2-factor edges have already been inserted, the smaller the size of the subtree that can be cut. So the tests get more expensive
 while at the same time the possible gain gets smaller. Not doing canonicity tests close to the end gives a small speedup: tests showed that not doing canonicity tests for the last 4
 edges (except of course when the graph is completed to a cubic graph) gives a total speedup of approximately $10\%$, which finally saved many CPU years.

 {\bf b.)} A larger speedup is obtained by the following method which was already used in an analogous way in \cite{minibaum} and \cite{permutationsnarks}. An
 exact speedup can not be given, as the method is an integral part of the implementation that can not just be removed or switched off. Assume that we have a
 graph $G$ for which $n_e$ non-2-factor edges have already been inserted and that a labelling $L'$ of a 2-factor $F'$ with $S_1(F')=S_1(F_0)$ -- with possibly $F'=F_0$,
 but $L'$ a different labelling than $L_0$ -- is checked.  If a complete prefix of $S_2(F',L')$ is smaller than a complete prefix of $S_2(F_0,L_0)$, due to
 (iii) we can backtrack, so $G$ has no successors in the recursion. If a complete prefix of $S_2(F',L')$ is larger than a complete prefix of $S_2(F_0,L_0)$,
 then this combination of $F',L'$ will result in a larger string for all graphs obtained by adding further edges and will never contradict the minimality of
 $S_1(F_0),S_2(F_0,L_0)$. So there is no need to test this combination for any successor of $G$. The same insight can be used for complete prefixes that are
 identical up to the first undefined position of $S_2(F',L')$: up to this position, no successor needs to compare the prefix -- it will be identical -- and the
 successor can start comparing the string when the vertex corresponding to the first undefined position has degree 3 -- and only from this position on.
 Building a data structure that allows to know exactly which 2-factors and labellings already established for smaller $n_e$ need to be evaluated further and
 from which position on, can result in comparing a prefix of $S_1(F_0),S_2(F_0,L_0)$ only once for the graph for which the prefix was first computed, instead of
 possibly millions of times for all its descendants.

 \bigskip

 \subsection{Strong Snarks}\label{sec:strong}
 
 There are several ways to even strengthen the requirements of uncolourability. We will now discuss one of the early requirements proposed by Jaeger \cite{Jaeger85}.
 For a graph $G$ and an edge $e$ of $G$ let $G_e$ denote the result of an {\em edge reduction} applied to $e$:
 the edge $e$ is removed and each vertex that has now degree 2 is removed and its former neighbours are connected. The inverse operation is called {\em edge insertion},
 so our edge doubling operation is a special case of edge insertion.
 Jaeger \cite{Jaeger85} proposed to call a snark $G$ a {\em strong snark} if for
 each edge $e$ the edge reduction applied to $e$ gives a class 2 graph.
 He did not require the snarks to have girth at least 5 and there are strong g4-snarks as well as
 strong proper snarks. For some conjectures -- e.g. the cycle double cover conjecture \cite{celmins} -- only strong snarks are possible smallest
 counterexamples -- but in fact often with even stronger requirements on the girth than just girth 5.

 So far, complete lists of strong snarks were obtained by filtering complete lists of snarks and in fact especially the complete list of proper snarks on 38 vertices produced by
 minisnark could be filtered very fast, as canonical 2-factors are encoded in the output. This is due to the following lemma that follows the ideas of the more general theorems
 30 in \cite{measures}  and 4 in \cite{criticalthesame}:

 \begin{lemma}\label{lem:strong}
   
A snark is a strong snark if and only if there is no 2-factor with exactly two odd cycles and an edge containing vertices of both odd cycles.

   \end{lemma}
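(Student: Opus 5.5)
Since every snark is a cubic graph, the edge reduction $G_e$ of a snark $G$ at an edge $e=\{u,w\}$ is obtained by deleting $e$ and suppressing $u$ and $w$. In the generic case the four other edges at $u$ and $w$ are distinct, and then $G_e$ is cubic. My plan is to prove the lemma edge by edge: $G_e$ is class 1 if and only if $G$ has a 2-factor with exactly two odd cycles $C_1,C_2$ such that $e$ contains a vertex of $C_1$ and a vertex of $C_2$. This is the same statement as Lemma~\ref{lem:double}, read in the reverse direction. Doubling $e$ means subdividing two edges and joining the new vertices, while edge reduction deletes an edge and suppresses its endpoints, so the two operations are inverse to each other. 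The idea is therefore to rerun the proof of Lemma~\ref{lem:double} with the roles of $G$ and the modified graph exchanged: in $G$, the edge $e$ plays the role of the new edge $e'$.

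For the direction "2-factor gives colourable $G_e$": let $F$ be a 2-factor of $G$ whose only odd cycles are $C_1,C_2$, with $u\in C_1$ and $w\in C_2$. Then $e\notin F$. Suppressing $u$ shortens $C_1$ by one and suppressing $w$ shortens $C_2$ by one, so both become even. All other cycles of $F$ are untouched. The result is an even 2-factor of the cubic graph $G_e$, so $G_e$ is class 1. Here I use the equivalence, already used in the paper, between being class 1 and having an even 2-factor.

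For the converse, suppose $G_e$ is 3-edge-coloured. Let $a$ and $b$ be the edges of $G_e$ created by suppressing $u$ and $w$, and lift the colouring back to $G$. I expect the case analysis to mirror the proof of Lemma~\ref{lem:double}.
\begin{itemize}
\item If $a$ and $b$ have the same colour, say 1, subdivide both with colour 1 and colour $e$ with 1 as well. This gives a conflict at $u$ and $w$, so instead look at the even 2-factor $F$ formed by colours 2 and 3 in $G_e$; it does not contain $a$ or $b$. Adding $u$ and $w$ to $F$ is not directly possible, so I plan to argue through the even 2-factor determined by the colour of $a$ instead.
\item Concretely, take the 2-factor $F$ of $G_e$ formed by the two colours other than the colour of $a$. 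Then $a\notin F$.
\begin{itemize}
\item If also $b\notin F$, then in $G$ the vertices $u$ and $w$ are not yet covered. Subdividing, we can reroute through $e$ only if $u$ and $w$ lie on one cycle.
\item If $b\in F$, then subdividing $b$ makes its cycle odd, and I would use the colour of $e$ together with a swap, as in Figure~\ref{fig:lem}.
\end{itemize}
\end{itemize}
The cleanest route is to repeat exactly the argument of Lemma~\ref{lem:double}. Choose $F$ so that $e$ gets the third colour in a proper colouring. Case (i) is that $u$ and $w$ end up on different cycles of length parity odd/odd, which yields the desired 2-factor. Case (ii) is that they end up on the same cycle, which yields an even 2-factor of $G$, contradicting that $G$ is a snark.

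Finally, I need to handle the degenerate situations. If $e$ lies in a 4-cycle, or if suppression creates multiple edges, then $G_e$ may fail to be cubic or simple. For girth $\ge4$ snarks, suppression creates no loops. Parallel edges arise only when $e$ is the middle edge of a 4-cycle, and the class 1/class 2 reasoning about even 2-factors still works for multigraphs. I expect the main obstacle to be the bookkeeping in the "$\Rightarrow$" direction: choosing the right pair of colours so that the lifted structure is a 2-factor of $G$ with exactly two odd cycles separated by $e$, and not merely an even 2-factor of $G_e$. I would first try to reduce to Lemma~\ref{lem:double} directly. Doubling an edge of $G_e$ that lies on the path $a,\cdot,b$ is not literally the inverse here, so failing that I would redo its proof verbatim, with $e$ in place of $e'$.
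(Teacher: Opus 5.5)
Your direction from the 2-factor to a colouring of $G_e$ is correct and matches the paper. The converse, however, is not established: the one step that carries it is left open, and the concrete choices you make are the wrong ones.

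Since $e$ is not an edge of $G_e$, a 2-factor $F$ of $G_e$ lifts to a 2-factor of $G$ (by re-subdividing $a$ and $b$) only if \emph{both} $a$ and $b$ lie in $F$. Otherwise $u$ (or $w$) is not covered at all, and adding $e$ gives it degree at most one. So $F$ must be the union of two colour classes that contain the colours of $a$ and $b$. If $a$ and $b$ have the same colour $c_1$, take $c_1$ and an arbitrary second colour; otherwise take the colours of $a$ and $b$. The lift then avoids $e$ and passes through $u$ and $w$. If $a$ and $b$ lie on the same cycle of $F$, that cycle grows by two and you get an even 2-factor of $G$, contradicting that $G$ is a snark. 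Otherwise exactly the two cycles through $a$ and $b$ become odd, and $e$ joins them.

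Both of your explicit choices exclude $a$: $F$ formed by colours $2,3$ when $a,b$ have colour $1$, and $F$ formed by the two colours other than that of $a$. So the sub-cases that follow (rerouting through $e$, a swap as in Figure~\ref{fig:lem}) have no 2-factor of $G$ to work with. Your closing ``choose $F$ so that $e$ gets the third colour'' points the right way. As written, though, it is not meaningful: $G$ has no proper 3-edge-colouring, and $e$ carries no colour in $G_e$. You also list the choice of the colour pair yourself as the unresolved obstacle.

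The confusion comes from the claim that this is Lemma~\ref{lem:double} read in reverse. In Lemma~\ref{lem:double}, the colourability of the \emph{larger} graph is characterised by 2-factors of the smaller snark. Here, the colourability of the \emph{smaller} graph $G_e$ is characterised by 2-factors of the larger snark $G$. The hard direction of Lemma~\ref{lem:double} extracts a 2-factor by suppressing the ends of $e'$, whereas here you must build one by subdividing.

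Rerunning the $\Rightarrow$ part of that proof with $e$ in place of $e'$ would need a colouring of a graph that contains $e$, i.e.\ of $G$ itself, and none exists. What you actually need is the subdivision argument from the $\Leftarrow$ part of Lemma~\ref{lem:double}, together with the colour-pair choice above. You also need the observation that the same-cycle case yields an even 2-factor of $G$.
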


 \begin{proof}

   $\Leftarrow:$ Let $G$ be a snark that is not strong. So there is an edge $e$, so that $G_e$ is colourable. Let such a colouring be given. If the new,
   formerly subdivided, edges $e_1,e_2$ in $G_e$ have the same colour $c_1$, let $c_2$ be an arbitrary colour different from $c_1$. Otherwise let $c_1,c_2$ be
   the colours of $e_1$, resp.\ $e_2$.  Let $F$ be the 2-factor defined by the edges with colours $c_1$ and $c_2$. If $e_1,e_2$ would be part of the same cycle
   in this 2-factor, this cycle with the edges subdivided again would be an even 2-factor of $G$ -- a contradiction. So they lie on different cycles and we get
   a 2-factor $F'$ of $G$ with $e$ connecting the only two odd cycles in $F'$.

   $\Rightarrow:$ Let a snark $G$ have an edge $e$ connecting the only two odd cycles in a 2-factor, in $G_e$. Then these two cycles become
   even in $G_e$, so $G_e$ is colourable and $G$ is therefore not a strong snark.

 \end{proof}

 Even without using the special structure of the coded graphs, applying Lemma~\ref{lem:strong} for filtering the list of all 1.051.945.967 proper snarks on 38
 vertices took only about 14 minutes on an Intel(R) Core(TM) i7-9700 CPU restricted to 3 GHz -- the time for decompressing the file of input graphs (about 3.3
 minutes) not counted. When using the defining 2-factors that can easily be reconstructed from the labeling, it took even less time. E.g. the $99,998\%$ of the 
 graphs with the canonical 2-factor of type $(5,33)$ (but also many others) could immediately be decided not to be strong.

 But as {\em minisnark} constructs all 2-factors anyway (unless it backtracks due to an even 2-factor or a 2-factor contradicting canonicity), the basic algorithm
 of minisnark could  easily be extended as follows to get one step further for proper snarks than just by filtering:

 Initial 2-factors with only two cycles, which are then necessarily odd,  are not used at all. As there would always be edges between them, the snarks constructed from
 them would never be strong.  When the initial 2-factor has next to even cycles exactly 2 odd cycles, all edges between the odd cycles are forbidden for the construction.  Whenever a new
 2-factor with exactly 2 odd cycles is constructed after a new edge has been inserted, it is tested whether there are edges between the two cycles and if yes,
 the program backtracks. If there are no such edges, for the descendants of this subgraph, all edges between these two new cycles are forbidden for the construction.

 This algorithm can be applied for strong g4-snarks as well as for strong proper snarks, but as constructing g4-snarks on $n+2$ vertices is relatively fast -- if all proper snarks
 on up to $n$ vertices are available -- for strong g4-snarks it is faster to generate all g4-snarks and filter them for strong ones.
 
 \medskip
 
 If all snarks on $n$ vertices are available, there is also another method to generate all strong proper snarks on $n+2$ vertices. The following lemma is
 already proven in \cite{removableedges4cyc} as Lemma~10(b), but as the proof is short, we will give an independent proof here.

\begin{lemma}\label{lem:reduce2}
  (Andersen, Fleischner, Jackson \cite{removableedges4cyc})

  Let $G$ be a cyclically 4-connected cubic graph and $C_0$ a component after removing a cyclic 4-cut $C$ with $|C_0|>4$ and so that no proper subset of $C_0$ is a component after removing a cyclic 4-cut.
  If $e\subset C_0$, then applying an edge reduction to $e$ gives a cyclically 4-connected graph.
 
 \end{lemma}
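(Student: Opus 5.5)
We argue by contradiction, in the same way as Lemma~\ref{lem:reduce}. Let $e=\{u,v\}\subset C_0$ and let $G_e$ be the result of the edge reduction. $G_e$ is obtained from $G-e$ by suppressing the two vertices of degree 2. By the first part of Observation~\ref{obs:reduce}, $G_e$ and $G-e$ have the same cyclic connectivity. So assume $G-e$ has a cyclic cut $D$ with at most 3 edges. By the second part of Observation~\ref{obs:reduce}, $D'=D\cup\{e\}$ is a cyclic cut of $G$ with at most 4 edges, so it is a cyclic 4-cut. Let $A,B$ be the two components of $G-D'$, with $u\in A$ and $v\in B$. As in the proof of Lemma~\ref{lem:reduce}, we use that in a minimal cyclic cut of a cubic graph no two edges share a vertex, and that all components of $G-D'$ contain cycles.

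The main step is an uncrossing argument with the given 4-cut $C$. Write $\bar C_0$ for the other side of $C$. Consider the four intersections $A\cap C_0$, $B\cap C_0$, $A\cap\bar C_0$, $B\cap\bar C_0$. Since $u\in A\cap C_0$ and $v\in B\cap C_0$, the first two are nonempty. Let $d(X)$ be the number of edges leaving $X$. Submodularity of the cut function gives
$$d(A\cap C_0)+d(B\cup \bar C_0)\le d(A)+d(C_0)=8,$$
and the analogous inequality holds with $A$ and $B$ exchanged. In a cyclically 4-connected cubic graph, any vertex set $X$ such that both $X$ and its complement contain a cycle has $d(X)\ge 4$. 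A set $X$ for which the induced subgraph is a forest with $k$ vertices has $d(X)=3k-2(k-c)=k+2c\ge k+2$, where $c\ge1$ is the number of components. The goal is to show that $A\cap C_0$ (or $B\cap C_0$) induces a cyclic part with exactly 4 outgoing edges, and that its complement also contains a cycle. Then $A\cap C_0$ is a proper subset of $C_0$ that is a component after removing a cyclic 4-cut, contradicting the minimality of $C_0$. Properness is clear, because $v\notin A$.

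The expected obstacle is the case analysis when these intersections are acyclic or small. This can happen in three ways: $A\cap C_0$ might be a tree, $A\cap\bar C_0$ might be empty, or $B\cup\bar C_0$ might fail to contain a cycle. The plan for handling it is as follows.
\begin{itemize}
\item If $A\subseteq C_0$, then $A$ itself is the forbidden proper subset, since $A$ is a component of $G-D'$ for the cyclic 4-cut $D'$. The same holds for $B$.
\item Otherwise both $A$ and $B$ meet $\bar C_0$. Count the edges of $C$ and of $D'$ incident to the four parts, in the same way as the 12-edge count in Lemma~\ref{lem:reduce}. The sum $d(A\cap C_0)+d(B\cap C_0)+d(A\cap\bar C_0)+d(B\cap\bar C_0)$ is at most $2(|C|+|D'|)=16$, reduced by twice the number of edges of $C\cap D'$.
\item If both $C_0$-parts were acyclic with at least $k+2$ outgoing edges each, and the $\bar C_0$-parts also needed at least 3 or 4 outgoing edges, the count would force $C_0=A\cap C_0\cup B\cap C_0$ to consist of very few vertices joined essentially only by $e$. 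Combining this with $d(C_0)=4$ and the fact that $C_0$ contains a cycle should yield $|C_0|\le4$, contradicting $|C_0|>4$.
\end{itemize}
This last counting step is where $|C_0|>4$ must enter; it excludes $C_0$ being a 4-cycle. I expect this step to require the most care.
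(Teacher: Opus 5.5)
Your reduction to a cyclic 4-cut $D'=D\cup\{e\}$ with $u\in A$, $v\in B$ is correct and agrees with the paper. So is your first case: $A\subseteq C_0$ or $B\subseteq C_0$ contradicts the minimality of $C_0$. The gap is the crossing case, which is the whole content of the lemma and which you leave as a hope (``should yield'' $|C_0|\le 4$).

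\textbf{The cyclic case is not excluded.} Your third bullet treats only the situation where both $C_0$-parts are acyclic. Nothing rules out $A\cap C_0$ containing a cycle and having 5 or more outgoing edges. So your stated goal, a cyclic $C_0$-part with exactly 4 outgoing edges, is never reached.

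\textbf{The displayed inequality is wrong.} $B\cup\bar C_0$ is the complement of $A\cap C_0$, so as written it says $2\,d(A\cap C_0)\le 8$, which is not submodularity. The correct instance is $d(A\cap C_0)+d(A\cup C_0)\le 8$, i.e.\ $d(A\cap C_0)+d(B\cap\bar C_0)\le 8$, which pairs \emph{opposite} corners. Also, $B\cup\bar C_0$ always contains a cycle, since $\bar C_0$ does, so that listed ``obstacle'' never occurs.

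Two ingredients are needed to finish.
\begin{enumerate}
\item No corner can be a single vertex $w$. All three edges at $w$ would lie in $C\cup D'$, so two of them would belong to the same minimal cut and share $w$. Hence every nonempty corner $X$ has $d(X)\ge 4$. If $X$ is acyclic, then $d(X)\ge |X|+2\ge 4$. If $X$ contains a cycle, then so does its complement, which contains $C_0$ or $\bar C_0$, and cyclic 4-connectivity applies.
\item Since $|A\cap C_0|+|B\cap C_0|=|C_0|>4$, one $C_0$-corner, say $A\cap C_0$, has at least 3 vertices. Then $d(A\cap C_0)\ge 5$ in either case. If it is acyclic, this follows from $d\ge |X|+2$. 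If it is cyclic, $d=4$ would make it a proper subset of $C_0$ cut off by a cyclic 4-cut, since its complement contains the cycle in $\bar C_0$.
\end{enumerate}
The opposite-corner inequality now gives $d(B\cap\bar C_0)\le 3$ for a nonempty corner, contradicting the first point.

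This is precisely the paper's proof. The paper phrases the last step as a count: the four corners have total outgoing degree at most 16, so the other three have at most 11, and one of them has at most 3. Note that $|C_0|>4$ enters by producing a corner with at least 3 vertices, uniformly for the cyclic and acyclic cases, not through a separate bound $|C_0|\le 4$ in an all-acyclic subcase.
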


 \begin{proof}

   Due to Observation~\ref{obs:reduce}, it is sufficient to prove that $e$ is not contained in a cyclic 4-cut. Let $C_1$ be the other component after removing $C$. If $e$ is contained in a cyclic 4-cut $D$,
   then $D$ has components $D_0$, $D_1$ and none of them is a proper subset of $C_0$, as $C_0$ is minimal, and none is a proper subset of $C_1$, as $e\in D$ and both endpoints of $e$
   are in $C_0$, but in different components after removing $D$. So we have a partition $C_0\cap D_0$, $C_0\cap D_1$, $C_1\cap D_0$, and $C_1\cap D_1$ of the vertex set and $8$ edges between these sets.
   None of the sets contains just one vertex, as in that case three edges would leave the set and at least two of them would belong to the same cut -- but edges in minimal cuts do not share vertices.
   As $|C_0|>4$, at least one of $C_0\cap D_0$, $C_0\cap D_1$ ( w.l.o.g.\  $C_0\cap D_0$) has more than 2 vertices, so more than 4 edges leave  $C_0\cap D_0$: if  $C_0\cap D_0$ contains a cycle this is due to the fact
   that $C_0$ is minimal and otherwise because more than 2 vertices in a component without a cycle imply more than 4 outgoing edges. Counting edges as outgoing for both sides, at most
   $11$ edges can leave the other 3 sets, so for at least one set there are at most 3 edges leaving it. If it would be a trivial cut, there would be two edges of a minimal cut sharing a vertex, so these 3 edges form a cyclic cut
   -- a contradiction.

 \end{proof}

 \begin{lemma}

   All strong snarks on $n+2$ vertices can be constructed from (not necessarily strong) snarks on $n$ vertices by edge insertion.

 \end{lemma}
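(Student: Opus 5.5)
We need to show: every strong snark $G$ on $n+2$ vertices has an edge $e$ such that $G_e$ is a snark on $n$ vertices, i.e. cubic, class 2 and cyclically 4-connected. If $G$ has girth at least 5 and $G_e$ has a 4-cycle, $G_e$ may be a g4-snark, which is still in the allowed class.

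The strong condition gives class 2 for free: by definition, $G_e$ is class 2 for \emph{every} edge $e$. So the plan is to find one edge $e$ whose reduction is cyclically 4-connected, and to check that $G_e$ is simple and cubic.

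\textbf{Case 1: $G$ has a cyclic 4-cut.} Choose a cyclic 4-cut $C$ and a component $C_0$ of $G-C$ that is inclusion-minimal among all components of cyclic 4-cuts.
\begin{itemize}
\item If $|C_0|>4$, take any edge $e$ with both endpoints in $C_0$. Such an edge exists: $C_0$ has at least 5 vertices and only 4 edges leave it, so it spans edges. Lemma~\ref{lem:reduce2} then gives that $G_e$ is cyclically 4-connected.
\item If $|C_0|\le 4$, then $C_0$ contains a cycle, so it is a 4-cycle, and $G$ has girth 4. Here I would instead take $e$ to be an edge of this 4-cycle, in the orientation given by Lemma~\ref{lem:reduce}. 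The reduction $R$ of that lemma, applied to an edge pair, removes one edge of the pair and suppresses its two endpoints. This is exactly an edge reduction $G_e$ with $e=e_3$. Lemma~\ref{lem:reduce} needs at least 10 vertices, which is harmless because snarks have at least 10 vertices and $G$ has $n+2\ge 12$. Note that we may not pass to $G-Z$ here: the lemma must produce a single edge insertion.
\end{itemize}

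\textbf{Case 2: $G$ is cyclically 5-connected.} Then $G$ has no 4-cycles, and for any edge $e$ the only cyclic 3-cuts of $G_e$ would come, by Observation~\ref{obs:reduce}, from cyclic 4-cuts of $G$ containing $e$. There are none, so any edge works.

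\textbf{Remaining checks.}
\begin{itemize}
\item Cubicity of $G_e$ is automatic.
\item Simplicity fails only if the two endpoints of $e$ have a common neighbour or are adjacent via another edge. That requires a triangle or a 4-cycle through $e$. A triangle is impossible by girth. A 4-cycle $v_1v_2v_3v_4$ with $e=v_1v_2$ yields a double edge only if the suppressed vertices' neighbours coincide. In a cyclically 4-connected graph on more than 8 vertices, a 4-cycle is induced and its outer neighbours are distinct. So $G_e$ is simple in all cases, but this must be verified explicitly, in particular for the edges chosen inside $C_0$ in Case 1.
\item The vertex count of $G_e$ is $n$.
\end{itemize}

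\textbf{Main obstacle.} The step I expect to be hardest is the bookkeeping in Case 1 when $|C_0|\le4$. One must confirm that the reduction from Lemma~\ref{lem:reduce} is exactly an edge reduction, and that the degenerate possibilities for $C_0$ (a 4-cycle, or a component that is trivially small) are fully covered by Lemmas~\ref{lem:reduce} and \ref{lem:reduce2} together.
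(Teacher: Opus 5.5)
Your proof is correct and follows essentially the paper's route. A 4-cycle is reduced via Lemma~\ref{lem:reduce}, which is the content of Lemma~\ref{lem:weak}, though there the paper gets class 2 from Lemma~\ref{lem:deco} rather than from strongness. An edge inside a minimal component with more than 4 vertices is reduced via Lemma~\ref{lem:reduce2}, with class 2 coming from strongness. Splitting by the size of a minimal component instead of by girth, and handling the cyclically 5-connected case separately via Observation~\ref{obs:reduce}, makes explicit a case that the paper's girth-$\ge 5$ argument passes over (without a cyclic 4-cut there is no minimal component to pick).
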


 \begin{proof}

  If a strong snark $G$ on $n+2$ vertices has girth $4$, this follows from Lemma~\ref{lem:weak} even without the requirement that $G$ is strong. If the girth is
  at least $5$, then each minimal component has more than $4$ vertices, so Lemma~\ref{lem:reduce2} implies that an edge reduction applied to an edge in such a minimal component
  produces a cyclically 4-connected graph, which is class 2 as $G$ is strong.

 \end{proof}

 While strong snarks with girth 5 can not be constructed by edge insertion only from strong snarks with fewer vertices, the situation is again much nicer for
 girth 4. The following lemma also explains why strong snarks with girth 4 occur later than the first ones with girth 5.

 \begin{lemma}\label{lem:allstrong}

   All strong snarks with girth $4$ on $n+2$ vertices can be constructed from strong snarks on $n$ vertices by edge-doubling.
   
   Furthermore, a snark constructed by applying the reverse of an edge doubling operation to a strong snark, is strong, so it is even true that by edge doubling,
   strong snarks can only be constructed from strong snarks.

 \end{lemma}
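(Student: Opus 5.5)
The first statement should follow from the second together with Lemma~\ref{lem:weak}. If $G$ is a strong snark with girth $4$ on $n+2$ vertices, the proof of Lemma~\ref{lem:weak}~a.) gives an edge pair $\{e'_1,e'_2\}$ in a 4-cycle $Z$ such that $R(\{e'_1,e'_2\})$ yields a snark $G'$ on $n$ vertices. Here $G'$ is class 2 by Lemma~\ref{lem:deco} and cyclically 4-connected by Lemma~\ref{lem:reduce}. Moreover $G$ arises from $G'$ by edge doubling. So it suffices to prove the second statement: if $G$ is a strong snark and $G'$ is obtained by a reverse edge doubling, then $G'$ is strong.

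I will argue by contraposition. Fix notation: $G$ arises from $G'$ by doubling the edge $e_2=\{u,v\}$ in the path $e_1,e_2,e_3$. The edge $e_1$ is subdivided by a new vertex $x$, the edge $e_3$ by a new vertex $y$, and $xy$ is added, so $Z=u\,x\,y\,v$ is a 4-cycle of $G$. Assume $G'$ is not strong, so some edge $f$ of $G'$ has a colourable reduction $G'_f$. I want to exhibit an edge $f^*$ of $G$ with $G_{f^*}$ colourable, which contradicts the strongness of $G$.

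\emph{Generic case:} $f$ is at distance at least one from $e_2$, so $f$ is not incident with $u$ or $v$, and the reduction of $f$ does not touch $e_1,e_2,e_3$. Take $f^*=f$. Then $G_f$ is obtained from $G'_f$ by the same doubling of $e_2$, so $G_f$ still contains the 4-cycle $Z$. Also $G_f-Z$ is a subgraph of $G'_f-\{u,v\}$, and hence of the colourable graph $G'_f$, so it is colourable. Lemma~\ref{lem:deco} then gives that $G_f$ is class 1. This is the clean core of the argument. One small point needs care: the suppression of the degree-2 vertices after deleting $f$ must not involve $u$ or $v$. This holds exactly when $f$ is not incident with $u$ or $v$, and it is also fine when $f$ shares an end with $e_1$ or $e_3$ away from $u,v$, because the doubling still applies to the reduced graph.

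\emph{Boundary cases:} $f=e_2$, or $f$ is one of the four edges incident with $u$ or $v$ other than $e_2$. This is where I expect the main obstacle, since the reduction of $f$ in $G'$ destroys the path $e_1,e_2,e_3$, so the doubling no longer commutes with the reduction. For these cases I would switch to the criterion of Lemma~\ref{lem:strong}. A colourable $G'_f$ gives a 2-factor $F'$ of $G'$ with exactly two odd cycles $C_1,C_2$ and an edge joining them, which may be $f$ or a nearby edge. I then lift $F'$ to a 2-factor $F$ of $G$ by a case analysis on which of $e_1,e_2,e_3$ lie in $F'$.
\begin{itemize}
\item If $e_1,e_3\in F'$, replace them by their subdivided halves. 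If they lie on one cycle, reroute that cycle through $xy$ and $uv$ so that parities are preserved. If they lie on different cycles, keep both cycles and use $xy$ as the joining edge.
\item If $e_2\in F'$, insert $x$ and $y$ by replacing $uv$ with the path $u\,x\,y\,v$. This changes a cycle length by $2$ and keeps the parity.
\end{itemize}
In each case the aim is a 2-factor of $G$ with exactly two odd cycles and an edge of $G$ joining them. The reverse direction of Lemma~\ref{lem:strong} then shows $G$ is not strong, which is the required contradiction. The parity bookkeeping in the case where $e_1$ and $e_3$ both lie on a single odd cycle is the step I would check most carefully. If it fails, I would instead recolour $G_{f^*}$ directly using a Kempe-chain argument as in Lemma~\ref{lem:deco}.
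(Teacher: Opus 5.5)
Your reduction of the first statement to the second is correct. So is your generic case ($f$ not incident with $u$ or $v$, $G_f-Z=G'_f-\{u,v\}$, then Lemma~\ref{lem:deco}); it matches the paper's argument for edges not meeting $e_2$. The gap is in the boundary cases, where a local lift of $F'$ need not exist.

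Write $e_1=ua$, $e_3=vb$, and let $uc$ and $vd$ be the third edges at $u$ and $v$ in $G'$. Let $M'$ be the perfect matching complementary to $F'$; since $f$ joins two different cycles of $F'$, $f\in M'$. Take $f=e_1$ and suppose the matching edge at $v$ is $vd$, so $F'$ contains $uc$, $e_2$ and $e_3$.

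\begin{itemize}
\item Your first bullet does not apply here, and your second fails: $e_3$ becomes the path $v\,y\,b$ in $G$, so replacing $uv$ by $u\,x\,y\,v$ uses $vy$ twice.
\item In fact no 2-factor of $G$ coincides with $F'$ on the edges not incident with a vertex of $Z$. Such a 2-factor would force $xa$ and $vd$ into the complementary matching and keep $uc$ and $yb$ out of it, leaving $u$ unmatched.
\item Nothing in your argument excludes this configuration. It is exactly what the proof of Lemma~\ref{lem:strong} produces when the colouring of $G'_{e_1}$ gives the edge created at $a$ the same colour as $e_3$.
\item The same obstruction occurs for $f=uc$ with $vb\in M'$.
\end{itemize}

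So in these cases the witnessing 2-factor of $G$ must differ from $F'$ outside $Z$. Your Kempe-chain fallback, which does not even name $f^*$, does not close this gap.

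The missing idea, which is what the paper uses, is to choose $f^*$ as the half of the subdivided edge away from $u$: for $f=e_1$ let $f^*=xa$. In $G_{xa}$ the vertices $u,y,v$ form a triangle with pendant edges to $c,b,d$, and contracting this triangle gives exactly $G'_{e_1}$. Replacing a vertex by a triangle preserves 3-edge-colourability, so if $G'_{e_1}$ is colourable then so is $G_{xa}$. One checks in the same way that $f^*=uc$, $yb$, $vd$ work for $f=uc$, $e_3$, $vd$ respectively.

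Two smaller points about your first bullet:
\begin{itemize}
\item The parity bookkeeping is wrong: subdividing one edge on each of two different cycles flips both parities. This situation only arises for $f=e_2$, where those cycles are $C_1$ and $C_2$. The lift is then an even 2-factor of $G$, contradicting that $G$ is a snark. Your conclusion survives, but not via ``two odd cycles joined by $xy$''. The paper simply notes that $G-Z$ is a subgraph of $G'_{e_2}$.
\item The case you flagged as delicate, $e_1$ and $e_3$ on one cycle, is harmless: route that cycle as $a\,x\,u\,v\,y\,b$ and put $xy$ in the matching.
\end{itemize}
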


 \begin{proof}

   The fact that strong snarks with girth 4 (just like those that are not strong) can be constructed from smaller snarks by edge doubling follows from
   Lemma~\ref{lem:weak}. It remains to be shown that the snark it was constructed from is strong, or equivalently that the inverse of an edge doubling operation
   applied to a strong snark always results in a strong snark. Let $G_{n+2}$ be a strong snark on $n+2$ vertices and $G_n$ be a snark that is the result of an edge
   reduction applied to an edge of the 4-cycle $Z$. We have to show that $G_n$ is strong, that is: that for an edge $e$ of $G_n$, the graph $(G_n)_e$ is
   class 2.  We use the notation of Figure~\ref{fig:weakconstruct}.  If $e$ does not contain a vertex of $e_2$, $(G_n)_e$ contains $(G_{n+2})_e-Z$ as a
   subgraph, but $(G_{n+2})_e$ is class 2 as $G_{n+2}$ is strong and as it still contains $Z$, we can apply Lemma~\ref{lem:deco} which implies that
   $(G_{n+2})_e-Z$ is class 2, so also $(G_n)_e$. If $e=e_2$, then $G_{n+2}-Z$ is a subgraph of $(G_n)_e$, so again we have that $(G_n)_e$ is class 2.

   The last remaining case is that $e$ is one of the edges containg exactly one vertex of $e_2$ -- w.l.o.g.\ $e=e_1$. This case is depicted in
   Figure~\ref{fig:weaksnark} with the removed edge $e$ shown as a dashed line. In this case $(G_n)_e$ is the graph obtained from $(G_{n+2})_e$ by contracting a
   triangle to a vertex, which preserves the colour class -- so it is class 2 as $(G_{n+2})_e$ is class 2.
   
\begin{figure}[tb]
	\centering
	\includegraphics[width=0.7\textwidth]{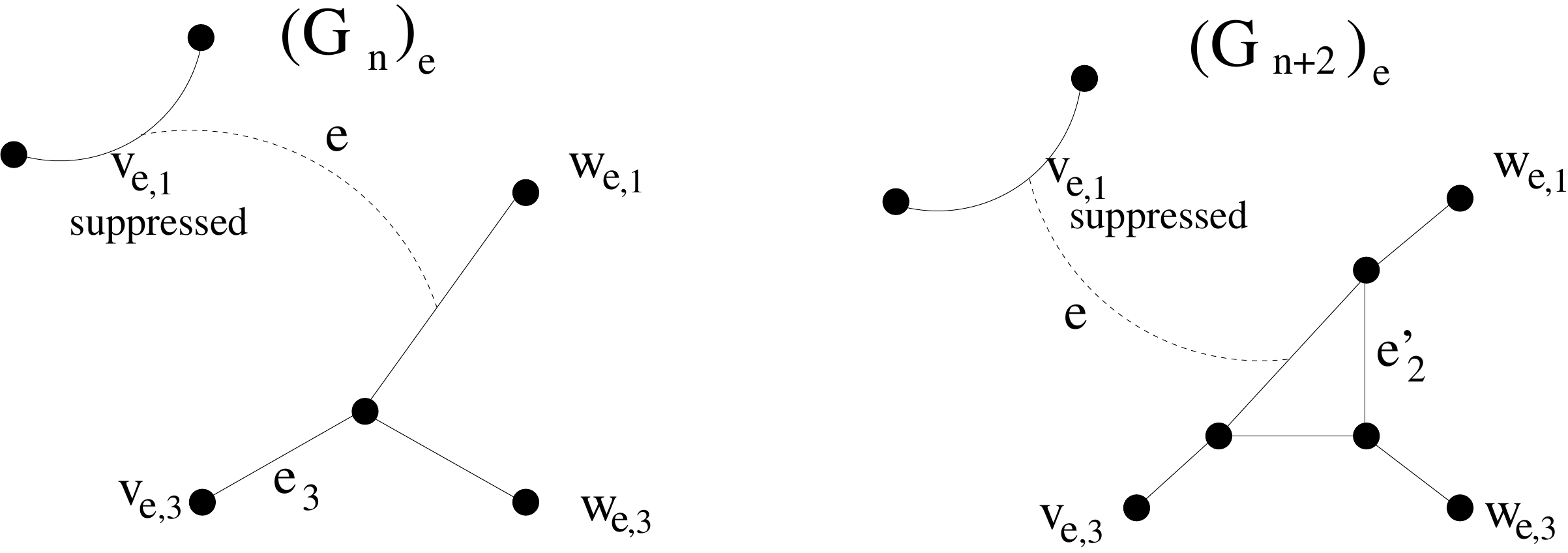}
	\caption{The graphs $(G_n)_e$ and $(G_{n+2})_e$ for $e=e_1$ with the notation of Figure~\ref{fig:weakconstruct}.}
	\label{fig:weaksnark}
\end{figure}

   \end{proof}

\section{Results}

In Table~\ref{tab:results} we give the numbers of g4-snarks and proper snarks found by the programs {\em tetration} and {\em minisnark} implementing the
algorithms described in the previous parts. The programs can be obtained from the authors and from the \verb+arXiv.org+ page of the article, where they are stored as ancillary files.
The lists of all proper snarks and lists of some g4-snarks can be downloaded from the House of Graphs \cite{HoG2}.

\vspace*{0.7cm}

\begin{table}
  \begin{center}
    \begin{minipage}{16cm}
    \begin{tabular}{|l|l|l|l|l|l|}
      \hline
 number of   & girth 4 snarks & proper snarks  & proper snarks & proper snarks &  cyclically \\
 vertices &  (girth$=4$)  & (girth$\ge 5$)  & with girth 6 & with girth 7 &  5-connected  \\
    & &  &    &  &   proper snarks  \\
  \hline
  10 &  0 ** &  1 ** & 0 ** & 0 ** & 1 **  \\
  \hline
  12 & 0 **  & 0 **  & 0 ** & 0 ** &  0 ** \\
  \hline
  14 &  0 ** & 0 **  &0 **  &0 **  &  0 ** \\
  \hline
  16 & 0 **  & 0 **  &0 **  &0 **  & 0 **  \\
  \hline
  18 &  0  ** & 2 **  & 0  ** &   0 ** & 0 **  \\
  \hline
  20 &  0  ** & 6  ** &  0 ** &   0 ** &  1 ** \\
  \hline
  22 & 11   ** & 20  ** &  0 ** &  0 **  &  2 ** \\
  \hline
  24 & 117   ** & 38  ** & 0  ** &   0 ** &  2 ** \\
  \hline
  26 & 1,017   ** &  280 ** & 0  ** &   0 ** & 10 **  \\
  \hline
  28 & 9,617   ** & 2,900  ** & 1   ** &   0 ** & 75 **  \\
  \hline
  30 &  111,455  ** & 28,399  ** & 1  ** &   0 ** &  509 ** \\
  \hline
  32 & 1,471,891   ** &  293,059 ** & 0   ** &   0 ** &  2,953  ** \\
  \hline
  34 &  21,453,366  * &  3,833,587 * &  0 * &   0 * & 19,935  * \\
  \hline
  36 & 344,732,184   * & 60,167,732  *  & 1  * &   0 * &  180,612 * \\
  \hline
  38 & 6,090,271,932  & 1,051,945,967   & 39   * &   0 * &  1,925,620  \\
  \hline
  40 & 113,585,516,609  &   & 276  &   0 * &   \\
  \hline
  42 &   &   &  &   0 * &   \\
  \hline
    \end{tabular}
     \end{minipage}
\caption{Numbers of snarks.  In the table, numbers confirmed by snarkhunter are marked with a * and numbers
  confirmed by snarkhunter and minibaum are marked **. The other numbers were produced by the programs described in this article.
  Of course for small numbers of vertices, the numbers of snarks were already known long before these
programs.} \label{tab:results}
\end{center}
\end{table}

\begin{table}
  \begin{center}
\hfill\begin{minipage}{6cm}
  \begin{tabular}{|l|l|}
    \hline
  2-factor & number of graphs \\
  \hline
(5,11,22)  &  3 \\
\hline
(5,13,20)  &  5 \\
\hline
(5,15,18)  &  5 \\
\hline
(5,17,16)  &  6 \\
\hline
(5,19,14)  &  32 \\
\hline
(5,21,12)  &  27 \\
\hline
(5,21,6,6)  &  3 \\
\hline
(5,23,10)  &  40 \\
\hline
(5,25,8)  &  804 \\
\hline
(5,27,6)  &  7028 \\
\hline
(5,33) &  1,051,924,110 \\
\hline
(5,5,11,17)  &  54 \\
\hline
(5,5,13,15)  &  24 \\
\hline
(5,5,5,17,6)  &  1 \\
\hline
(5,5,5,23)  &  3,442 \\
\hline
(5,5,7,21)  &  942 \\
\hline
(5,5,9,13,6)  &  1 \\
\hline
(5,5,9,19)  &  575 \\
\hline
(5,7,13,13)  &  1 \\
\hline
(5,7,9,17)  &  6 \\
\hline
(5,9,9,15)  &  3 \\
\hline
(7,17,14)  &  1 \\
\hline
(7,21,10)  &  3 \\
\hline
(7,23,8)  &  1 \\
\hline
(7,25,6)  &  179 \\
\hline
\end{tabular}
\end{minipage}\hfill
\begin{minipage}{6cm}
  \begin{tabular}{|l|l|}
    \hline
  2-factor & number of graphs \\
    \hline

(7,31)  &  1,636 \\
\hline
(9,15,14)  &  2 \\
\hline
(9,17,12)  &  3 \\
\hline
(9,17,6,6)  &  4 \\
\hline
(9,19,10)  &  2 \\
\hline
(9,21,8)  &  20 \\
\hline
(9,23,6)  &  496 \\
\hline
(9,29)  &  2,029 \\
\hline
(11,11,16)  &  1 \\
\hline
(11,17,10)  &  1 \\
\hline
(11,19,8)  &  3 \\
\hline
(11,21,6)  &  29 \\
\hline
(11,27)  &  1,266 \\
\hline
(13,13,12)  &  11 \\
\hline
(13,13,6,6)  &  5 \\
\hline
(13,15,10)  &  2 \\
\hline
(13,17,8)  &  41 \\
\hline
(13,19,6)  &  174 \\
\hline
(13,25)  &  2,436 \\
\hline
(15,15,8)  &  1 \\
\hline
(15,17,6)  &  2 \\
\hline
(15,23)  &  346 \\
\hline
(17,21)  &  68 \\
\hline
(19,19)  &  93 \\
\hline
\end{tabular}
\end{minipage}
\caption{Numbers of snarks for the various canonical 2-factors of the proper snarks on 38 vertices.} \label{tab:results2fc}
\end{center}
\end{table}

\begin{table}
  \begin{center}
    \begin{minipage}{6cm}
    \begin{tabular}{|l|l|l|}
      \hline
 number of   & strong  & strong \\ 
 vertices &  g4-snarks &  proper snarks \\
    & (girth$=4$) &  (girth$\ge 5$) \\
  \hline
  34 & 0*   &   7*  \\
  \hline
  36 & 33*  &  25*  \\
  \hline
  38 &  407  &  298*   \\
  \hline
  40 & 4,553   &   7,654  \\
  \hline
    42 & 79,731   &  $\ge$ 142,130   \\
  \hline
    \end{tabular}
    \end{minipage}
\caption{Numbers of strong snarks. There are no strong snarks with fewer than 34 vertices and all strong snarks in the table have a cyclic 4-cut. Numbers marked
  with a * were already computed before by filtering the output of snarkhunter \cite{snarkapp} (36 vertices) and by applying edge insertions (38 vertices).
  Nevertheless at the time the list of strong snarks on 38 vertices was produced, Lemma~\ref{lem:allstrong} was not yet known, so the list was presented as
  possibly not being complete. Note that there are more strong g4-snarks than strong proper snarks on 36 and 38 vertices, but fewer on 40 and 42 vertices. The
  lower bound for the number of strong proper snarks on 42 vertices comes from only those that can be obtained from strong snarks on 40 vertices by edge
  insertion and is therefore most likely far from the total number of strong proper snarks. E.g. for 40 vertices about 80$\%$ of the strong proper snarks can be
  obtained this way from strong snarks on 38 vertices.} \label{tab:resultsstrong}
\end{center}
\end{table}

Among the proper snarks on 38 vertices, $99,998\%$ had a 2-factor of type $(5,33)$, so they were generated for this initial 2-factor. Nevertheless, the number of different
canonical 2-factors grows. While for 24 vertices the only canonical 2-factor had type $(5,19)$ and for 26 vertices only the 2-factors of type $(5,21)$ and $(13,13)$ (one graph) were
canonical, for $38$ vertices already 49 different 2-factors occur as canonical ones. Up to 38 vertices, only canonical 2-factors with at most three 5-cycles occur, but it
is known \cite{2fac_5cyc} that for each $k$ there are proper snarks where each 2-factor -- so also a canonical one -- contains at least $k$ 5-cycles. This also
shows that the number of cycles in a canonical 2-factor of a proper snark can not be bounded by a constant.  The proper snarks with girth 6 in the list all have a canonical 2-factor of type
$(7,|V|-7)$, except for the snark on 36 vertices which has a canonical 2-factor of type $(9,27)$.

The time necessary for the proper snarks on $38$
vertices was a bit less than 60 CPU years on AMD EPYC 7552 (Rome @ 2.2 GHz) processors. For girth 6 and $40$ vertices it was a bit less than 40 CPU years. The
test for the existence of snarks with girth 7 on 42 vertices took slightly more than 5 CPU years. The generation of all g4-snarks on 40 vertices was run on different machines,
but normalized to the same AMD EPYC 7552 (Rome @ 2.2 GHz) processors took a bit less than 3.5 CPU years.

\section{Final remarks}

When computer algorithms and their results are presented, there is always one big step that is not reflected in the article: the translation from the algorithm
to the program that was actually used to compute the results. Of course, the program code must be available too, but really checking every detail of the program
code is very hard. During the translation, many steps that are just sketched in the article must be implemented in detail and, in order to be as efficient as
possible, sometimes in a way that is not straightforward. An example: when the algorithm description says {\em ``for girth at least $g$, never connect vertex
  $v$ to a vertex $w$ at distance smaller than $g-1$''}, this seems a straightforward test to be done. In order to do this efficiently, for all possible $w$, in
fact a set of vertices at distance at least $g-1$ is computed. To do also this computation as fast as possible, for each vertex next to the set of adjacent
vertices also the set of vertices at distances 2 and 3 are stored -- or to be exact: the sets of vertices with degree 2 at distances 2 and 3. These sets have to
be updated whenever an edge is added or removed. These details of an implementation are not innovative or complicated enough to justify a description in an article,
but can nevertheless be the cause for errors.

To this end programs need to be thoroughly tested -- parts as well as the whole program. Independent tests of computational results are very valuable. {\bf If}
there is an error in an implementation, the chance is very small that a possible error in the implementation of an independent algorithm has the same impact. This is maybe
even more reliable than two referees looking at a theoretical proof, as the approach of the two referees is most likely quite similar and there is no reason to
believe that they detect different kinds of small hidden errors. 

The lists confirmed by two or even three independent programs can therefore be considered extremely reliable.
Based on the agreement of these large lists, also the lists only constructed by minisnark or tetration can be considered as at least as reliable as a well refereed theoretical
proof.  Nevertheless, an independent new approach confirming these lists would be valuable -- for new ideas in the algorithm as well as for confirming the
numbers. Of course allowing the next step -- that is e.g. proper snarks on 40 vertices -- would be even better. As the speed of computers is not increasing
fast any more, an algorithm would have to be approximately 20 times faster than the ones presented in this paper. Alternatively maybe the use of GPUs
can make the next step possible.

\bigskip

\section{ Acknowledgement}

A note of the first author: I want to express my gratitude to my late PhD supervisor Andreas Dress -- not only for sharing his knowledge of which I could only absorb a
small part, but also for his support and for being open for {\bf all} kinds of mathematical, chemical, or biological problems. I tried to learn from that too\dots

The by far major part of the computational resources (Stevin Supercomputer Infrastructure) and services used in this work were provided by the VSC (Flemish Supercomputer
Center), funded by Ghent University, FWO and the Flemish Government – department EWI.

\bibliography{/home/gbrinkma/schreib/literatur.bib}

\end{document}

%% file: blanusa_0.tikz
\begin{tikzpicture}[scale=0.065]
\def\vertexscale{1.00}
\def\labelscale{1.00}
\node [circle,black,draw,scale=\vertexscale] (1) at (62.62729,28.84362) {1};
\node [circle,black,draw,scale=\vertexscale] (2) at (52.33802,0.00000) {2};
\node [circle,black,draw,scale=\vertexscale] (3) at (62.62729,-28.84362) {3};
\node [circle,black,draw,scale=\vertexscale] (4) at (-68.32644,-20.97256) {4};
\node [circle,black,draw,scale=\vertexscale] (5) at (-68.32644,20.97256) {5};
\node [circle,black,draw,scale=\vertexscale] (6) at (31.44361,58.46809) {6};
\node [circle,black,draw,scale=\vertexscale] (7) at (15.69310,34.81163) {7};
\node [circle,black,draw,scale=\vertexscale] (8) at (26.82647,0.00000) {8};
\node [circle,black,draw,scale=\vertexscale] (9) at (15.69310,-34.81163) {9};
\node [circle,black,draw,scale=0.9*\vertexscale] (10) at (31.44361,-58.46809) {10};
\node [circle,black,draw,scale=0.9*\vertexscale] (11) at (-44.22767,-31.16284) {11};
\node [circle,black,draw,scale=0.9*\vertexscale] (12) at (-33.91844,-0.00000) {12};
\node [circle,black,draw,scale=0.9*\vertexscale] (13) at (-44.22767,31.16284) {13};
\node [circle,black,draw,scale=0.9*\vertexscale] (14) at (-25.59232,60.62887) {14};
\node [circle,black,draw,scale=0.9*\vertexscale] (15) at (-5.35545,36.29170) {15};
\node [circle,black,draw,scale=0.9*\vertexscale] (16) at (-12.79022,-0.00000) {16};
\node [circle,black,draw,scale=0.9*\vertexscale] (17) at (-5.35545,-36.29170) {17};
\node [circle,black,draw,scale=0.9*\vertexscale] (18) at (-25.59232,-60.62887) {18};
\tkzDefPoint(-96.59258,25.88190){19}
\tkzDefPoint(-70.71068,-70.71068){20}
\tkzDefPoint(-96.59258,-25.88190){21}
\tkzDefPoint(-25.88190,-96.59258){22}
\tkzDefPoint(25.88190,-96.59258){23}
\tkzDefPoint(96.59258,25.88190){24}
\tkzDefPoint(96.59258,-25.88190){25}
\tkzDefPoint(-25.88190,96.59258){26}
\tkzDefPoint(25.88190,96.59258){27}
\tkzDefPoint(-70.71068,70.71068){28}
\tkzDefPoint(70.71068,70.71068){29}
\tkzDefPoint(70.71068,-70.71068){30}
\draw [black, line width=1mm] (1) to (2);
\draw [green, line width=1mm] (1) to (6);
\draw [black, line width=1mm] (1) to (24);
\node [draw=none,black,fill=none,scale=\labelscale] () at (101.42221,27.17600) {5};
\draw [black, line width=1mm] (2) to (3);
\draw [green, line width=1mm] (2) to (8);
\draw [black, line width=1mm] (3) to (25);
\node [draw=none,black,fill=none,scale=\labelscale] () at (101.42221,-27.17600) {4};
\draw [green, line width=1mm] (3) to (10);
\draw [black, line width=1mm] (4) to (21);
\node [draw=none,black,fill=none,scale=\labelscale] () at (-101.42221,-27.17600) {3};
\draw [black, line width=1mm] (4) to (5);
\draw [green, line width=1mm] (4) to (11);
\draw [black, line width=1mm] (5) to (19);
\node [draw=none,black,fill=none,scale=\labelscale] () at (-101.42221,27.17600) {1};
\draw [green, line width=1mm] (5) to (13);
\draw [black, line width=1mm] (6) to (7);
\draw [black, line width=1mm] (6) to (27);
\node [draw=none,black,fill=none,scale=\labelscale] () at (27.17600,101.42221) {10};
\draw [green, line width=1mm] (7) to (15);
\draw [black, line width=1mm] (7) to (8);
\draw [black, line width=1mm] (8) to (9);
\draw [black, line width=1mm] (9) to (10);
\draw [green, line width=1mm] (9) to (17);
\draw [black, line width=1mm] (10) to (23);
\node [draw=none,black,fill=none,scale=\labelscale] () at (27.17600,-101.42221) {6};
\draw [black, line width=1mm] (11) to (12);
\draw [black, line width=1mm] (11) to (18);
\draw [black, line width=1mm] (12) to (13);
\draw [green, line width=1mm] (12) to (16);
\draw [black, line width=1mm] (13) to (14);
\draw [green, line width=1mm] (14) to (26);
\node [draw=none,black,fill=none,scale=\labelscale] () at (-27.17600,101.42221) {18};
\draw [black, line width=1mm] (14) to (15);
\draw [black, line width=1mm] (15) to (16);
\draw [black, line width=1mm] (16) to (17);
\draw [black, line width=1mm] (17) to (18);
\draw [green, line width=1mm] (18) to (22);
\node [draw=none,black,fill=none,scale=\labelscale] () at (-27.17600,-101.42221) {14};
\tkzDefPoint(-68.55786,72.79986){A}
\tkzDefPoint(68.55786,72.79986){B}
\tkzDefPoint(0.0,0.0){C}
\tkzDrawArc[<-,line width=0.9mm, red](C,B)(A)
\tkzDefPoint(72.79986,68.55786){A}
\tkzDefPoint(72.79986,-68.55786){B}
\tkzDefPoint(0.0,0.0){C}
\tkzDrawArc[<-,line width=0.9mm, blue](C,B)(A)
\tkzDefPoint(68.55786,-72.79986){A}
\tkzDefPoint(-68.55786,-72.79986){B}
\tkzDefPoint(0.0,0.0){C}
\tkzDrawArc[->,line width=0.9mm, red](C,B)(A)
\tkzDefPoint(-72.79986,-68.55786){A}
\tkzDefPoint(-72.79986,68.55786){B}
\tkzDefPoint(0.0,0.0){C}
\tkzDrawArc[->,line width=0.9mm, blue](C,B)(A)
\node [black,circle,draw,fill=white,scale=0.75,line width=1mm] (20) at (-70.71068,-70.71068) {};
\node [black,circle,draw,fill=white,scale=0.75,line width=1mm] (28) at (-70.71068,70.71068) {};
\node [black,circle,draw,fill=white,scale=0.75,line width=1mm] (29) at (70.71068,70.71068) {};
\node [black,circle,draw,fill=white,scale=0.75,line width=1mm] (30) at (70.71068,-70.71068) {};
\end{tikzpicture}

%% file: blanusa.tikz
\begin{tikzpicture}[scale=0.065]
\def\vertexscale{1.00}
\def\labelscale{1.00}
\node [circle,black,draw,scale=\vertexscale] (1) at (26.82647,0.00000) {1};
\node [circle,black,draw,scale=\vertexscale] (2) at (52.33802,0.00000) {2};
\node [circle,black,draw,scale=\vertexscale] (3) at (62.62729,28.84362) {3};
\node [circle,black,draw,scale=\vertexscale] (4) at (31.44361,58.46809) {4};
\node [circle,black,draw,scale=\vertexscale] (5) at (15.69310,34.81163) {5};
\node [circle,black,draw,scale=\vertexscale] (6) at (15.69310,-34.81163) {6};
\node [circle,black,draw,scale=\vertexscale] (7) at (31.44361,-58.46809) {7};
\node [circle,black,draw,scale=\vertexscale] (8) at (62.62729,-28.84362) {8};
\node [circle,black,draw,scale=\vertexscale] (9) at (-68.32644,-20.97256) {9};
\node [circle,black,draw,scale=0.9*\vertexscale] (10) at (-68.32644,20.97256) {10};
\node [circle,black,draw,scale=0.9*\vertexscale] (11) at (-44.22767,31.16284) {11};
\node [circle,black,draw,scale=0.9*\vertexscale] (12) at (-25.59232,60.62887) {12};
\node [circle,black,draw,scale=0.9*\vertexscale] (13) at (-5.35545,36.29170) {13};
\node [circle,black,draw,scale=0.9*\vertexscale] (14) at (-12.79022,-0.00000) {14};
\node [circle,black,draw,scale=0.9*\vertexscale] (15) at (-33.91844,-0.00000) {15};
\node [circle,black,draw,scale=0.9*\vertexscale] (16) at (-44.22767,-31.16284) {16};
\node [circle,black,draw,scale=0.9*\vertexscale] (17) at (-25.59232,-60.62887) {17};
\node [circle,black,draw,scale=0.9*\vertexscale] (18) at (-5.35545,-36.29170) {18};
\tkzDefPoint(-96.59258,25.88190){19}
\tkzDefPoint(-70.71068,-70.71068){20}
\tkzDefPoint(-96.59258,-25.88190){21}
\tkzDefPoint(-25.88190,-96.59258){22}
\tkzDefPoint(25.88190,-96.59258){23}
\tkzDefPoint(96.59258,25.88190){24}
\tkzDefPoint(96.59258,-25.88190){25}
\tkzDefPoint(-25.88190,96.59258){26}
\tkzDefPoint(25.88190,96.59258){27}
\tkzDefPoint(-70.71068,70.71068){28}
\tkzDefPoint(70.71068,70.71068){29}
\tkzDefPoint(70.71068,-70.71068){30}
\draw [black, line width=1mm] (1) to (2);
\draw [green, line width=1mm] (1) to (6);
\draw [black, line width=1mm] (1) to (5);
\draw [black, line width=1mm] (2) to (3);
\draw [green, line width=1mm] (2) to (8);
\draw [black, line width=1mm] (3) to (4);
\draw [green, line width=1mm] (3) to (24);
\node [draw=none,black,fill=none,scale=\labelscale] () at (101.42221,27.17600) {10};
\draw [black, line width=1mm] (4) to (5);
\draw [green, line width=1mm] (4) to (27);
\node [draw=none,black,fill=none,scale=\labelscale] () at (27.17600,101.42221) {7};
\draw [green, line width=1mm] (5) to (13);
\draw [black, line width=1mm] (6) to (7);
\draw [black, line width=1mm] (6) to (18);
\draw [green, line width=1mm] (7) to (23);
\node [draw=none,black,fill=none,scale=\labelscale] () at (27.17600,-101.42221) {4};
\draw [black, line width=1mm] (7) to (8);
\draw [black, line width=1mm] (8) to (25);
\node [draw=none,black,fill=none,scale=\labelscale] () at (101.42221,-27.17600) {9};
\draw [black, line width=1mm] (9) to (21);
\node [draw=none,black,fill=none,scale=\labelscale] () at (-101.42221,-27.17600) {8};
\draw [black, line width=1mm] (9) to (10);
\draw [green, line width=1mm] (9) to (16);
\draw [green, line width=1mm] (10) to (19);
\node [draw=none,black,fill=none,scale=\labelscale] () at (-101.42221,27.17600) {3};
\draw [black, line width=1mm] (10) to (11);
\draw [black, line width=1mm] (11) to (12);
\draw [green, line width=1mm] (11) to (15);
\draw [green, line width=1mm] (12) to (26);
\node [draw=none,black,fill=none,scale=\labelscale] () at (-27.17600,101.42221) {17};
\draw [black, line width=1mm] (12) to (13);
\draw [black, line width=1mm] (13) to (14);
\draw [green, line width=1mm] (14) to (18);
\draw [black, line width=1mm] (14) to (15);
\draw [black, line width=1mm] (15) to (16);
\draw [black, line width=1mm] (16) to (17);
\draw [black, line width=1mm] (17) to (18);
\draw [green, line width=1mm] (17) to (22);
\node [draw=none,black,fill=none,scale=\labelscale] () at (-27.17600,-101.42221) {12};
\tkzDefPoint(-68.55786,72.79986){A}
\tkzDefPoint(68.55786,72.79986){B}
\tkzDefPoint(0.0,0.0){C}
\tkzDrawArc[<-,line width=0.9mm, red](C,B)(A)
\tkzDefPoint(72.79986,68.55786){A}
\tkzDefPoint(72.79986,-68.55786){B}
\tkzDefPoint(0.0,0.0){C}
\tkzDrawArc[<-,line width=0.9mm, blue](C,B)(A)
\tkzDefPoint(68.55786,-72.79986){A}
\tkzDefPoint(-68.55786,-72.79986){B}
\tkzDefPoint(0.0,0.0){C}
\tkzDrawArc[->,line width=0.9mm, red](C,B)(A)
\tkzDefPoint(-72.79986,-68.55786){A}
\tkzDefPoint(-72.79986,68.55786){B}
\tkzDefPoint(0.0,0.0){C}
\tkzDrawArc[->,line width=0.9mm, blue](C,B)(A)
\node [black,circle,draw,fill=white,scale=0.75,line width=1mm] (20) at (-70.71068,-70.71068) {};
\node [black,circle,draw,fill=white,scale=0.75,line width=1mm] (28) at (-70.71068,70.71068) {};
\node [black,circle,draw,fill=white,scale=0.75,line width=1mm] (29) at (70.71068,70.71068) {};
\node [black,circle,draw,fill=white,scale=0.75,line width=1mm] (30) at (70.71068,-70.71068) {};
\end{tikzpicture}